\documentclass[12pt]{amsart}
\usepackage{a4wide}
\usepackage{amssymb}
\usepackage{amsthm}
\usepackage{amsmath}
\usepackage{amscd}
\usepackage{verbatim}
\usepackage{color}
\usepackage{hyperref}
\textheight8.5in \textwidth6.75in \numberwithin{equation}{section}

\theoremstyle{plain}
\newtheorem{theorem}{Theorem}[section]
\newtheorem{corollary}[theorem]{Corollary}
\newtheorem{lemma}[theorem]{Lemma}
\newtheorem{proposition}[theorem]{Proposition}

\theoremstyle{definition}
\newtheorem{definition}[theorem]{Definition}
\newtheorem{example}{Example}

\theoremstyle{remark}
\newtheorem*{remark}{Remark}

\renewcommand{\Re}{\operatorname{Re}}
\renewcommand{\Im}{\operatorname{Im}}
\newcommand{\Dhat}{\widehat{D}}
\newcommand{\R}{\mathbb{R}}
\newcommand{\Q}{\mathbb{Q}}
\newcommand{\Z}{\mathbb{Z}}
\newcommand{\N}{\mathbb{N}}
\newcommand{\C}{\mathbb{C}}

\renewcommand{\H}{\mathbb{H}}
\renewcommand{\L}{\mathbb{L}}

\renewcommand{\P}{\mathbb{P}}

\newcommand{\kzxz}[4]{\left(\begin{smallmatrix} #1 & #2 \\ #3 & #4\end{smallmatrix}\right) }
\newcommand{\kabcd}{\kzxz{a}{b}{c}{d}}

\newcommand{\calF}{\mathcal{F}}

\newcommand{\calM}{\mathcal{M}}

\newcommand{\calV}{\mathcal{V}}

\newcommand{\eps}{\varepsilon}



\newcommand{\Orth}{\operatorname{O}}

\newcommand{\SL}{{\text {\rm SL}}}


\newcommand{\smallabcd}{\left(\begin{smallmatrix}a & b \\ c & d\end{smallmatrix}\right)}

\newcommand{\pihol}{\pi_{hol}}

\begin{document}

\title[Special values of shifted convolution Dirichlet series]{Special values of shifted convolution Dirichlet series }

\dedicatory{For Jeff Hoffstein on his 61st birthday.}

\author{Michael H. Mertens and  Ken Ono}

\address{Mathematisches Institut der Universit\"at zu K\"oln, Weyertal 86-90,
D-50931 K\"oln, Germany} \email{mmertens@math.uni-koeln.de}

\address{Department of Mathematics and Computer Science, Emory University,
Atlanta, Georgia 30022} \email{ono@mathcs.emory.edu}

\subjclass[2010]{11F37, 11G40, 11G05, 11F67}

\begin{abstract}
In a recent important paper, Hoffstein and Hulse \cite{HoffsteinHulse} generalized the notion of
Rankin-Selberg convolution $L$-functions by defining
{\it shifted convolution} $L$-functions.
We investigate symmetrized versions of their functions, and
 we prove that the generating functions of certain special values
are linear combinations of weakly holomorphic quasimodular forms and ``mixed mock modular'' forms.
\end{abstract}

\maketitle

\section{Introduction and Statement of Results}\label{sect:intro}

Suppose that $f_1(\tau)\in S_{k_1}(\Gamma_0(N))$ and
$f_2(\tau)\in S_{k_2}(\Gamma_0(N))$
 are cusp forms of even weights $k_1\geq k_2$ with $L$-functions
$$
L(f_i,s)=\sum_{n=1}^{\infty}\frac{a_i(n)}{n^s}.
$$
In the case of equal weights, Rankin and Selberg \cite{Rankin, Selberg} independently introduced the so-called {\it Rankin-Selberg convolution}
$$
L(f_1\otimes f_2,s):=\sum_{n=1}^{\infty}\frac{a_{1}(n)\overline{a_{2}(n)}}{n^s},
$$
works which are among the most important contributions to the modern theory of automorphic forms.
Later in 1965, Selberg \cite{Selberg2} introduced {\it shifted convolution} $L$-functions, and these
series have now played an important role in progress towards Ramanujan-type conjectures for Fourier coefficients and the
Lindel\"of Hypothesis for
automorphic $L$-functions inside the critical strip (for example, see \cite{Blomer, Good1, Good2, Survey} and the references therein).

In a recent paper, Hoffstein and Hulse
 \cite{HoffsteinHulse}\footnote{Here we choose slightly different normalizations
for Dirichlet series from those that appear in \cite{HoffsteinHulse}.} introduced the
 {\it shifted convolution
series}
\begin{equation}\label{shiftedseries}
D(f_1,f_2,h;s):=\sum_{n=1}^{\infty}\frac{a_{1}(n+h)\overline{a_{2}(n)}}{n^s}.
\end{equation}
When $k_1=k_2$ they obtained the meromorphic continuation of these series and certain multiple Dirichlet series
which are obtained by additionally summing in $h$ aspect. Moreover, as an important application they
obtain a Burgess-type bound for $L$-series associated to modular forms.

Here we study the arithmetic properties of these Dirichlet series, where we additionally allow the weights to be non-equal. For this, it is convenient to consider the
{\it derived} shifted convolution series
\begin{equation}\label{derivedshiftedseries}
D^{(\mu)}(f_1, f_2,h;s):=
\sum_{n=1}^{\infty}\frac{a_{1}(n+h)\overline{a_{2}(n)}(n+h)^{\mu}}{n^s},
\end{equation}
where $h\geq 1$ and $\mu\geq 0$ are integers 
and $\Re(s)> \tfrac{k_1+k_2}{2}+\mu$. Of course, we have that $D^{(0)}(f_1,f_2,h;s)=D(f_1,f_2,h;s)$.

For each $\nu\geq 0$ and each $h\geq 1$ we define
(see Section~\ref{Lfcndef}) a {\it symmetrized shifted convolution} Dirichlet
series $\widehat{D}^{(\nu)}(f_1,f_2,h;s)$ using the $D^{(\mu)}(f_1,f_2,h;s)$.
We consider their special values at $s=k_1-1$ in $h$-aspect, which are well-defined as (conditionally) convergent series if $\nu\leq\tfrac{k_1-k_2}2$. They are absolutely convergent if $\nu<\tfrac{k_1-k_2}2$. In order to investigate these special values we
construct the $q$-series
\begin{equation}
\L^{(\nu)}(f_1,f_2;\tau):=\sum_{h=1}^{\infty}\widehat{D}^{(\nu)}(f_1,f_2,h;k_1-1)q^h,
\end{equation}
where $q:=e^{2\pi i \tau}$. 
In the special case where $\nu=0$ and $k_1=k_2$, we have that
$$\Dhat(f_1,f_2,h;s)=D(f_1,f_2,h;s)-D(\overline{f_2},\overline{f_1},-h;s),
$$
 which then implies that
$$
\L^{(0)}(f_1,f_2;\tau)=\sum_{h=1}^{\infty}\Dhat(f_1,f_2,h;k_1-1)q^h.
$$

It is natural to ask for a characterization of these functions. For example,
consider the case where
$f_1=f_2=\Delta$, the unique normalized weight 12 cusp form on $\SL_2(\Z)$. Then we have that
$$
\L^{(0)}(\Delta,\Delta;\tau)=-33.383\dots q + 266.439\dots q^2 - 1519.218\dots q^3+ 4827.434\dots q^4-\dots
$$
Using the usual Eisenstein series $E_{2k}=E_{2k}(\tau)$ and Klein's $j$-function, we let
$$
\sum_{n=-1}^{\infty} r(n)q^n:=-\Delta(j^2-1464j-\alpha^2+1464\alpha),
$$
where $\alpha=106.10455\dots$. By letting $\beta=2.8402\dots$, we find that
\begin{displaymath}
\begin{split}
-\frac{\Delta}{\beta}&\left (\frac{65520}{691}+\frac{E_2}{\Delta}-\sum_{n\neq 0} r(n)n^{-11}q^n\right)\\
&=-33.383\dots q + 266.439\dots q^2 - 1519.218\dots q^3+ 4827.434\dots q^4-\dots.
\end{split}
\end{displaymath}
It turns out that this $q$-series indeed equals $\L^{(0)}(\Delta,\Delta;\tau)$, and the purpose of this
paper is to explain such formulae.

The $q$-series $\L^{(\nu)}(f_1,f_2;\tau)$ for $\nu=\tfrac{k_1-k_2}{2}$ arise from the {\it holomorphic projections} of completed
{\it mock modular forms}.
In 1980, Sturm \cite{St80} introduced the method of holomorphic projection, and in their celebrated work
 on Heegner points and derivatives of $L$-functions, Gross and Zagier \cite{GrZ86} further developed this technique.
  In unpublished work, Zwegers suggested applying
such holomorphic projections to the theory of mock modular forms.
Recently, Imamo\u{g}lu, Raum, and Richter \cite{IRR13} have obtained general theorems in this direction
with applications to Ramanujan's mock theta functions in mind. This work, as well as 
the preprint \cite{Mertpreprint} by the first author,  are based on unpublished notes of Zagier.
Here we continue this theme, and we show that the $\L^{(\nu)}(f_1,f_2;\tau)$, again for $\nu=\tfrac{k_1-k_2}{2}$, also arise in this way.

The results we obtain depend on the theory of {\it harmonic Maass forms}, certain nonholomorphic modular forms which explain
Ramanujan's enigmatic {\it mock theta functions} (see \cite{Ono08, ZagierBourbaki, ZwegersDiss} and the references therein).
For any $f_1$, we denote by $M_{f_1}$ a harmonic Maass form of weight $2-k_1$ whose \emph{shadow} is $f_1$ (see Section ~\ref{nutsandbolts} for the definition).
Such forms always exist.
Our main result gives a surprising relation between the $\nu$th \emph{Rankin-Cohen bracket} (again, see Section~\ref{nutsandbolts} for definitions) of $M_{f_1}$ and $f_2$ and the generating function $\L^{(\nu)}(f_1,f_2;\tau)$. These Rankin-Cohen brackets are (completed) \emph{mixed mock modular forms} (see Section~\ref{nutsandbolts}).

As a technical condition for our result, we need the notion of $M_{f_1}$ being \emph{good} for $f_2$. By this we mean that the Rankin-Cohen bracket function $[M_{f_1}^+,f_2]_\nu$, where $M_{f_1}^+$ denotes the \emph{holomorphic part} of $M_{f_1}$, is holomorphic on the upper-half of the complex plane, and is bounded as one approaches all representatives for all cusps (see Section~\ref{proofs} for details). Our main result relates such Rankin-Cohen bracket functions
to $\L^{(\nu)}(f_1,f_2;\tau)$ modulo $\widetilde{M}_{2}(\Gamma_0(N))$.
Here the space $\widetilde{M}_2(\Gamma_0(N))$ is given by
\begin{equation}
\widetilde{M}_2(\Gamma_0(N))=  \C E_2 \oplus M_2(\Gamma_0(N)
\end{equation}
where $M_k(\Gamma_0(N))$ is the space of weight $k$ holomorphic modular forms on $\Gamma_0(N)$.
Special care is required for weight 2 because the
Eisenstein series $E_2$ is not a holomorphic modular form; it is a {\it quasimodular form}.
In general we require the spaces
$\widetilde{M}^{!}_2(\Gamma_0(N))$, which are the extension of $\widetilde{M}_2(\Gamma_0(N))$ by the weight $2$ {\it weakly holomorphic
modular forms} on $\Gamma_0(N)$. Weakly holomorphic modular forms are those meromorphic modular forms whose poles (if any)
are supported at cusps. The space of such forms of weight $k$ on $\Gamma_0(N)$ is denoted by $M_k^{!}(\Gamma_0(N))$.

\begin{theorem}\label{mainthm}
Assume the notation above.  If $\nu= \tfrac{k_1-k_2}{2}$, then
\[\L^{(\nu)}(f_2,f_1;\tau) = -\frac{1}{(k_1-2)!}\cdot [M_{f_1}^{+},f_2]_{\nu}(\tau)+F(\tau),\]
where $F\in \widetilde{M}^{!}_{2}(\Gamma_0(N))$.
Moreover, if $M_{f_1}$ is good for $f_2$, then $F\in \widetilde{M}_{2}(\Gamma_0(N))$.
\end{theorem}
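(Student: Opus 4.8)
The plan is to realize $\L^{(\nu)}(f_2,f_1;\tau)$ as the holomorphic projection of a single weight-$2$ nonholomorphic modular object assembled from $M_{f_1}$ and $f_2$, in the spirit of Sturm and Gross--Zagier. The key structural observation is that the \emph{full} Rankin--Cohen bracket $[M_{f_1},f_2]_\nu$, formed from the complete harmonic Maass form $M_{f_1}$ (of weight $2-k_1$) and the cusp form $f_2$ (of weight $k_2$), transforms like a modular form of weight $(2-k_1)+k_2+2\nu$, which equals $2$ \emph{precisely} because $\nu=\tfrac{k_1-k_2}2$. Thus $[M_{f_1},f_2]_\nu$ is a nonholomorphic, weight-$2$, $\Gamma_0(N)$-modular function of moderate growth, and I would apply the holomorphic projection operator $\pihol$ to it. Splitting $M_{f_1}=M_{f_1}^{+}+M_{f_1}^{-}$ into its holomorphic and nonholomorphic parts and using linearity of the bracket in its first slot gives
\[
\pihol\!\left([M_{f_1},f_2]_\nu\right)=\pihol\!\left([M_{f_1}^{+},f_2]_\nu\right)+\pihol\!\left([M_{f_1}^{-},f_2]_\nu\right),
\]
and the theorem will follow by identifying each of the three objects appearing here.

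First I would dispose of the holomorphic piece. Since $M_{f_1}^{+}$ and $f_2$ are both holomorphic on $\h$, the bracket $[M_{f_1}^{+},f_2]_\nu$ is holomorphic on $\h$, and the only issue is its behavior at the cusps, where $M_{f_1}^{+}$ carries a principal part. When $M_{f_1}$ is \emph{good} for $f_2$ this bracket is bounded at every cusp, so $\pihol$ returns it unchanged and it is exactly the holomorphic term of the theorem. Without goodness, the surviving principal parts contribute a weight-$2$ weakly holomorphic form, and this is precisely what enlarges the ambient space from $\widetilde{M}_2(\Gamma_0(N))$ to $\widetilde{M}^{!}_2(\Gamma_0(N))$. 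I would also record the general fact that holomorphic projection of any weight-$2$ object of moderate growth lands in $\C E_2\oplus M_2^{!}(\Gamma_0(N))=\widetilde{M}^{!}_2(\Gamma_0(N))$, which is the source of the summand $F$ (and which collapses to $\widetilde{M}_2(\Gamma_0(N))$ when the input is bounded at the cusps).

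The heart of the proof, and the main obstacle, is the evaluation of $\pihol\!\left([M_{f_1}^{-},f_2]_\nu\right)$. Because $\xi_{2-k_1}(M_{f_1})=f_1$, the nonholomorphic part $M_{f_1}^{-}$ is an explicit period integral of $f_1$, with Fourier expansion of the shape $\sum_{n\geq 1}\overline{a_1(n)}\,\Gamma(k_1-1,4\pi n y)\,q^{-n}$ up to normalization. Writing $f_2=\sum_{m\geq 1}a_2(m)q^m$ and expanding the Rankin--Cohen bracket as its usual $\nu+1$-term sum of products of derivatives of the two factors, the $h$-th Fourier coefficient of $\pihol\!\left([M_{f_1}^{-},f_2]_\nu\right)$ becomes a sum over $n$ of $\overline{a_1(n)}\,a_2(n+h)$ weighted by an integral in $y$ of the incomplete gamma factor against powers of $y$ and $e^{-4\pi(n+h)y}$. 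Evaluating these integrals by the standard holomorphic-projection (beta-type) lemma collapses each term to a derived shifted convolution value $D^{(\mu)}(f_2,f_1,h;k_1-1)$; the binomial weights produced by differentiating the two bracket factors are exactly those defining the symmetrization, so summing the $\nu+1$ contributions yields $(k_1-2)!\cdot\Dhat^{(\nu)}(f_2,f_1,h;k_1-1)$. Summing over $h\geq 1$ gives $(k_1-2)!\cdot\L^{(\nu)}(f_2,f_1;\tau)$. Rearranging the displayed identity then produces
\[
\L^{(\nu)}(f_2,f_1;\tau)=-\frac{1}{(k_1-2)!}\,[M_{f_1}^{+},f_2]_{\nu}(\tau)+F(\tau),
\]
with $F=\tfrac{1}{(k_1-2)!}\,\pihol\!\left([M_{f_1},f_2]_\nu\right)\in\widetilde{M}^{!}_2(\Gamma_0(N))$, refining to $\widetilde{M}_2(\Gamma_0(N))$ under goodness.

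The most delicate points are the weight-$2$ subtleties. The Petersson-type integral defining $\pihol$ fails to converge absolutely in weight $2$ and must be regularized; this regularization is exactly what introduces the nonmodular $E_2$ and hence the space $\widetilde{M}_2(\Gamma_0(N))$. One must also justify interchanging $\pihol$ with both the $h$-summation and the inner $n$-summation in the boundary case $\nu=\tfrac{k_1-k_2}2$, where the symmetrized series are only conditionally convergent at $s=k_1-1$; controlling this convergence at the near-edge evaluation point, and matching all constants and signs in the incomplete-gamma integral, is where the genuine analytic work lies.
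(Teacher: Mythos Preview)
Your proposal is correct and follows essentially the same route as the paper: apply the (regularized) holomorphic projection to the weight-$2$ object $[M_{f_1},f_2]_\nu$, use that the holomorphic piece $[M_{f_1}^{+},f_2]_\nu$ passes through the projection unchanged while the Fourier coefficients of the nonholomorphic piece $[M_{f_1}^{-},f_2]_\nu$ are computed via the incomplete-gamma integrals to yield $(k_1-2)!\cdot\L^{(\nu)}(f_2,f_1;\tau)$, and then invoke the target-space statement for $\pihol^{reg}$ (respectively $\pihol$ in the good case). The only cosmetic caveat is that $[M_{f_1}^{+},f_2]_\nu$ and $[M_{f_1}^{-},f_2]_\nu$ are not individually modular, so the splitting should be read at the level of the Fourier-coefficient integral rather than as two separate applications of $\pihol$; the paper handles this exactly as you do, and your identification $F=\tfrac{1}{(k_1-2)!}\,\pihol^{reg}\!\left([M_{f_1},f_2]_\nu\right)$ matches theirs.
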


\medskip
\noindent{\it Three remarks.}

\smallskip
\noindent
(1) One can derive Theorem~\ref{mainthm} when $\nu=0$ and $M_{f_1}$ is good for $f_2$ from  Theorem~3.5 of \cite{IRR13}.

\smallskip
\noindent
(2) Given cusp forms $f_1$ and $f_2$, it is not generically true that there is a harmonic Maass form $M_{f_1}$ which is good for $f_2$.
To see this, consider the case where $f_1=f_2=f$, a normalized Hecke eigenform on $\SL_2(\Z)$. For $M_{f}$ to be good for $f$, it is necessary that
$M_{f}^{+}$ has at most a simple pole at infinity. By the theory of Poincar\'e series (see Section~\ref{poincareseries}) it is clear
that most Hecke eigenforms $f$ do not satisfy this condition.

\smallskip
\noindent
(3) The reason for the extra condition $\nu=\tfrac{k_1-k_2}{2}$ comes from the fact that on the one hand holomorphic projection only makes sense for weights at least $2$ (see Section \ref{sec:holproj}) and the non-holomorphic modular form $[M_{f_1},f_2]_{\nu}$ has weight $2\nu+2-k_1+k_2$, on the other hand we need $\nu\leq \tfrac{k_1-k_2}{2}$ to ensure convergence of the shifted convolution series $\widehat{D}^{(\nu)}(f_1,f_2,h;s)$ at $s=k_1-1$.
\medskip

The expression on the right hand side of Theorem~\ref{mainthm} is explicitly computable when $f_1=f_2$ by making use of the theory of Poincar\'e series
 (see Section~\ref{poincareseries} for notation and definitions).

\begin{corollary}\label{cuspidalgoodness} Suppose that $k\geq 2$ is even and $m$ is a positive integer.
If
$P(\tau):=P(m,k,N;\tau)\in S_{k}(\Gamma_0(N))$ and $Q(\tau):=
Q(-m,k,N;\tau)\in H_{2-k}(\Gamma_0(N))$, then
$$\L^{(0)}(P,P;\tau)= \frac{1}{m^{k-1}\cdot (k-1)!}\cdot Q^{+}(\tau) P(\tau) +F(\tau),
$$
where $F\in \widetilde{M}^{!}_2(\Gamma_0(N))$. Moreover, if $m=1$, then
$F\in \widetilde{M}_2(\Gamma_0(N))$.
\end{corollary}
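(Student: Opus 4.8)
The plan is to obtain Corollary~\ref{cuspidalgoodness} as a direct specialization of Theorem~\ref{mainthm} to $f_1=f_2=P$, where $P=P(m,k,N;\tau)$ is the weight-$k$ holomorphic Poincar\'e series. Since both forms have weight $k$, we have $\nu=\tfrac{k_1-k_2}{2}=0$, and the Rankin--Cohen bracket of index $0$ is just the ordinary product, $[M_P^+,P]_0=M_P^+\cdot P$. Thus Theorem~\ref{mainthm} gives, for any harmonic Maass form $M_P\in H_{2-k}(\Gamma_0(N))$ whose shadow is $P$,
\[
\L^{(0)}(P,P;\tau)=-\frac{1}{(k-2)!}\,M_P^+(\tau)P(\tau)+F(\tau),\qquad F\in\widetilde{M}^{!}_2(\Gamma_0(N)).
\]
It remains to produce such an $M_P$ from $Q=Q(-m,k,N)$ and to compute the resulting constant.

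The key input is the shadow relation between the two Poincar\'e families. First I would recall from Section~\ref{poincareseries} the explicit Fourier expansions of $P(m,k,N)$ and $Q(-m,k,N)$ in terms of Kloosterman sums and Bessel/Whittaker functions, and then evaluate $\xi_{2-k}\bigl(Q(-m,k,N)\bigr)$ by applying $\xi_{2-k}=2iy^{2-k}\overline{\partial_{\bar\tau}}$ to the nonholomorphic part of $Q$ (the holomorphic part is annihilated). This produces a relation of the shape $\xi_{2-k}(Q(-m,k,N))=c_{m,k}\,P(m,k,N)$ with an explicit real constant; comparison with the displayed identity forces $c_{m,k}=-(k-1)m^{k-1}$. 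Setting $M_P:=\tfrac{-1}{(k-1)m^{k-1}}\,Q(-m,k,N)$ then yields a harmonic Maass form whose shadow is exactly $P$ (the rescaling constant is real, so the antilinearity of $\xi$ causes no trouble), whence $M_P^+=\tfrac{-1}{(k-1)m^{k-1}}Q^+$. Substituting and using $(k-1)!/(k-2)!=k-1$ produces the claimed coefficient $\tfrac{1}{m^{k-1}(k-1)!}$ in front of $Q^+P$.

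For the refinement when $m=1$, I would check that $M_P$ is \emph{good} for $P$, which by Theorem~\ref{mainthm} upgrades $F$ to $\widetilde{M}_2(\Gamma_0(N))$. Goodness asks that $M_P^+\cdot P$ be holomorphic on $\H$ and bounded at every cusp; holomorphy on $\H$ is automatic. At $\infty$ the holomorphic part $Q^+$ has principal part $q^{-m}$ while $P$ vanishes like $q$, so $M_P^+P=O(q^{1-m})$, which is bounded precisely when $m=1$; this is exactly the source of the hypothesis. At the remaining cusps $P$ vanishes and the Maass--Poincar\'e series attached to $\infty$ has no principal part, so its holomorphic part does not grow and the product again stays bounded.

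The hard part will be the explicit evaluation of the shadow constant $c_{m,k}$. This is a genuine computation: one must track the normalization of the Whittaker functions defining $Q(-m,k,N)$, apply $\xi_{2-k}$ term by term, and match the resulting Kloosterman-sum/Bessel-function coefficients against those of $P(m,k,N)$ to extract the factor $-(k-1)m^{k-1}$. Controlling precisely the factorial and power-of-$m$ factors that yield the final constant $\tfrac{1}{m^{k-1}(k-1)!}$ is where all the care is required; everything else is a clean specialization of Theorem~\ref{mainthm}.
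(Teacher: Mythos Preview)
Your approach is essentially identical to the paper's: specialize Theorem~\ref{mainthm} to $f_1=f_2=P$ with $\nu=0$, invoke the shadow relation between $Q(-m,k,N)$ and $P(m,k,N)$ to build $M_P$, and verify goodness for $m=1$ via the growth of $Q$ and $P$ at the cusps (Lemma~\ref{growthQ}). One small correction: what you call $c_{m,k}$ is the \emph{shadow} constant, not the $\xi$-constant---Lemma~\ref{poincarerelationships} gives $\xi_{2-k}(Q)=(4\pi)^{k-1}m^{k-1}(k-1)P$, and the paper's shadow is $-(4\pi)^{1-k}\xi_{2-k}$, so the shadow of $Q$ is $-(k-1)m^{k-1}P$; your choice $M_P=\tfrac{-1}{(k-1)m^{k-1}}Q$ is then correct, and the ``hard part'' you flag is precisely Lemma~\ref{poincarerelationships}, already proved in the paper.
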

\begin{remark}
In Section~\ref{poincareseries} we shall see that $Q(-m,k,N;\tau)$ has a pole of order $m$ at $i\infty$ while $P(m,k,N;\tau)$ generally only has a simple zero there. This explains the special role of $m=1$ in the corollary above.
\end{remark}

\begin{example} Here we consider the case where $f_1=f_2=\Delta$, the unique normalized
cusp form of weight 12 on $\SL_2(\Z)$. Using the first $10^7$ coefficients of $\Delta$, one obtains the following
numerical approximations for the first few shifted convolution
values $\Dhat(\Delta,\Delta,h;11)$:

\begin{displaymath}
\begin{array}{|c||c|c|c|c|c|}
\hline h & 1 & 2 & 3 & 4 & 5 \\
\hline \ & \ &   &   &   &   \\
\Dhat(\Delta,\Delta,h;11) & -33.383\dots & 266.439\dots & -1519.218\dots & 4827.434\dots & -5704.330\dots\\
\hline
\end{array}
\end{displaymath}

\medskip
\noindent
We have that
$\Delta(\tau)=\frac{1}{\beta}P(1,12,1;\tau)$, where $\beta$ can be described in terms of a Petersson norm, or
as an infinite sum of Kloosterman sums weighted by $J$-Bessel functions as follows
$$
\beta:=\frac{(4\pi)^{11}}{10!}\cdot \Vert P(1,12,1)\Vert^2  =1+2\pi \sum_{c=1}^{\infty}\frac{K(1,1,c)}{c}\cdot J_{11}(4\pi/c)=2.8402\dots.
$$
Since $m=1$ and there are no weight 2 holomorphic modular forms on $\SL_2(\Z)$, Corollary~\ref{cuspidalgoodness} then implies that
\begin{displaymath}
\begin{split}
\L^{(0)}(\Delta,\Delta;\tau)&=\frac{Q^{+}(-1,12,1;\tau)\cdot \Delta(\tau)}{11!\cdot \beta}-\frac{E_2(\tau)}{\beta}\\
&=-33.383\dots q +266.439\dots q^2-1519.218\dots q^3 + 4827.434\dots q^4-\dots.
\end{split}
\end{displaymath}
Since the expressions for the coefficients of $Q^{+}(-1,12,1;\tau)$ are rapidly convergent,
this $q$-series identity provides an extremely efficient method for computing the values $\Dhat(\Delta,\Delta,h; 11)$.
\end{example}

\begin{example} Here we consider the newform $f=f_1=f_2=\eta(3\tau)^8\in S_4(\Gamma_0(9))$.
This form has complex multiplication by $\Q(\sqrt{-3})$, and is a multiple of the Poincar\'e
 series $P(1,4,9;\tau)$ because this space of cusp forms is one-dimensional. Here we show how complex multiplication
implies some striking {\it rationality} properties for the corresponding special values
$\Dhat(f,f,h;3)$.
 Using a computer, one obtains the following
numerical approximations for the first few shifted convolution values.

\begin{displaymath}
\begin{array}{|c||c|c|c|c|c|}
\hline h & 3 & 6 & 9 & 12 & 15 \\
\hline \ & \ &   &   &   &   \\
\Dhat(f,f,h;3) & -10.7466\dots & 12.7931\dots & 6.4671\dots & -79.2777\dots & 64.2494\dots\\
\hline
\end{array}
\end{displaymath}

\medskip
\noindent
We note that $\Dhat(f,f,h;3)=0$ whenever $n$ is not a multiple of $3$.

Below we define real numbers $\beta, \gamma,$ and $\delta$ which are approximately
$$\beta:=\frac{(4\pi)^3}{2}\cdot \Vert P(1,4,9)\Vert^2=1.0468\dots,\ \ \  \gamma=-0.0796\dots,\ \ \ \delta=-0.8756\dots.
$$
As we shall see, these real numbers arise naturally from the theory of Petersson inner products.
In the table below we consider the first few values of
$$
   T(f;h):=\beta \Dhat(f,f,h;3)+24\beta \gamma\sum_{d\mid h} d- 12\beta \delta\sum_{\substack{d\mid h\\ 3\nmid d}} d.
$$

\begin{displaymath}
\begin{array}{|c||c|c|c|c|}
\hline h & 3 & 6 & 9 & 12  \\
\hline \ & \ &   &   &     \\
T(f; h) & -8.250...\sim -\frac{33}{4} & 22.391... \sim \frac{2799}{125} & -8.229...\sim
-\frac{32919}{4000}& -61.992...\sim -\frac{8250771}{133100}\\
 \ & & & & \\
\hline
\end{array}
\end{displaymath}
It is indeed true that $T(f;h)$ is rational for all $h$.
To see this, we note that
$f=\frac{1}{\beta}P(1,4,9;\tau)$.
By applying Corollary~\ref{cuspidalgoodness} with $m=1$, we obtain a congruence between
$Q^{+}(-1,4,9;\tau)f(\tau)/\beta$ and $\L^{(0)}(f,f;\tau)$ modulo $
\widetilde{M}_2(\Gamma_0(9))$. In particular, we obtain the following identity
$$
\L^{(0)}(f,f;\tau)- \frac{Q^{+}(-1,4,9;\tau)f(\tau)}{\beta}=\gamma\left(1-24\sum_{n=1}^{\infty}
\sigma_1(3n)q^{3n}\right) + \delta\left(1+12\sum_{n=1}^{\infty}\sum_{\substack{d\mid 3n\\ 3\nmid d}}dq^{3n}\right).
$$
The rationality of $T(f;h)$ follows from the deep fact that 
$$
Q^{+}(-1,4,9;\tau)=q^{-1}-\frac{1}{4}q^2+\frac{49}{125}q^5-\frac{3}{32}q^8-\dots,
$$
has rational Fourier coefficients. Bruinier, Rhoades and the second author proved a general theorem
(see Theorem 1.3 of \cite{BOR}) which establishes the algebraicity of mock modular forms
whose shadows are CM forms.
\end{example}

\begin{example}
Here we consider the case of the weight 24 Poincar\'e series
\begin{displaymath}
\begin{split}
f(\tau&):=P(2,24,1;\tau)\\
& =0.00001585\dots q+2.45743060\dots q^2- 114.85545780\dots q^3+ 2845.49507680\dots q^4-\dots.
\end{split}
\end{displaymath}
We have the following
numerical approximations for the first few shifted convolution
values $\Dhat(f,f,h;23)$:

\begin{displaymath}
\begin{array}{|c||c|c|c|c|c|}
\hline h & 1 & 2 & 3 & 4 & 5 \\
\hline \ & \ &   &   &   &   \\
\Dhat(f,f,h;23) & -0.00000629\dots & -0.00092041\dots & 0.033927\dots & -0.472079\dots & 4.628028\dots\\
\hline
\end{array}
\end{displaymath}
We define
$$F:=\frac{1}{2^{23}}\cdot ( \gamma\cdot \frac{E_4^2 E_6}{\Delta}+\delta \cdot E_2)=\frac{\gamma}{2^{23}}\cdot q^{-1}+\dots
$$
where $\gamma=-0.00001585\dots$ and $\delta=-2.45743\dots$.  We have that $F\in \widetilde{M}^{!}_2(\Gamma_0(1))\setminus
\widetilde{M}_2(\Gamma_0(1))$. It turns out that
\begin{displaymath}
\begin{split}
&\L^{(0)}(f,f;\tau)=\frac{Q^{+}(-2,24,1;\tau)\cdot P(2,24,1;\tau)}{2^{23}\cdot 23!}+F\\
&\ \ \ =-0.00000629\dots q-0.000920\dots q^2+0.0339\dots q^3-0.4720\dots q^4+4.628\dots q^5+\dots.
\end{split}
\end{displaymath}
This example illustrates Corollary~\ref{cuspidalgoodness} in a situation where there is no $M_f$ which is good for $f$.
\end{example}

As mentioned above, Theorem~\ref{mainthm} relies critically on the theory of holomorphic projections, harmonic
Maass forms, and the combinatorial properties of Rankin-Cohen brackets.
In Section~\ref{nutsandbolts} we recall the essential properties of the theory of harmonic Maass forms,
Maass-Poincar\'e series, and Rankin-Cohen brackets.
In Section~\ref{Lfcndef} we define the relevant Dirichlet series and $q$-series
$\L^{(\nu)}(f_1,f_2;\tau)$. To obtain Theorem~\ref{mainthm}
we must modify the existing theory of holomorphic projections to obtain a {\it regularized holomorphic
projection}. This regularization is required for the general case of Theorem~\ref{mainthm}.
We give this in Section~\ref{proofs} by suitably modifying earlier work of Gross and Zagier.
 We then conclude with the proofs of the main
results of this paper.

\section*{Acknowledgements}
The authors thank Jeff Hoffstein and Thomas Hulse for some discussions concerning their work on shifted convolution $L$-functions, and the first author thanks Kathrin Bringmann, Don Zagier and Sander Zwegers for useful conversations related to harmonic Maass forms. The authors 
would also like to thank the anonymous referee for several helpful suggestions which helped to improve the outline of this paper.

Most of this work was carried out while the first author was visiting Emory University. For the financial support he thanks the DFG Graduiertenkolleg 1269 ``Global Structures in Geometry and Analysis'' at the University of Cologne. The second author thanks the generous support of the National Science Foundation and the Asa Griggs Candler Fund.

\section{Nuts and Bolts}\label{nutsandbolts}

\subsection{Harmonic Maass forms}

We recall some basic facts about harmonic Maass forms. These real-analytic modular forms were introduced by Bruinier and Funke in \cite{BF04}. This theory explains
the role of Ramanujan's enigmatic {\it mock theta functions} in the theory of automorphic forms
(see \cite{Ono08, ZagierBourbaki, ZwegersDiss}).

Throughout, we assume that $k\geq 2$ is even, and we let $\tau=x+iy \in\H$, where $x,\,y\in\R$. We define the weight $k$ \emph{slash operator} acting on smooth functions $f:\H\rightarrow\C$ by
\[(f|_k\gamma)(\tau):=(c\tau+d)^{-k}f\left(\tfrac{a\tau+b}{c\tau+d}\right),\]
where $\gamma=\kabcd\in\SL_2(\R)$. The weight $k$ \emph{hyperbolic Laplacian} is defined by
\[\Delta_k:=-y^2\left(\frac{\partial^2}{\partial x^2}+\frac{\partial^2}{\partial y^2}\right)+iky\left(\frac{\partial}{\partial x}+i\frac{\partial}{\partial y}\right).\]

\begin{definition}\label{def:harmMF}
A smooth function $f:\H\rightarrow\C$ is called a \emph{harmonic weak Maass form}\footnote{For convenience, we will usually use the term \emph{harmonic Maass form} and omit the word ``weak''.} of \emph{weight} $2-k$ on
 $\Gamma_0(N)$ if the following conditions hold:
\begin{enumerate}
\item $f$ is invariant under the action of $\Gamma_0(N)$, i.e. for all $\gamma=\kabcd\in\Gamma_0(N)$ we have
\[(f| _{2-k}\gamma)(\tau)=f(\tau)\]
 for all $\tau\in\H$.
\item $f$ lies in the kernel of the weight $2-k$ hyperbolic Laplacian, i.e.
\[\Delta_{2-k}f\equiv 0.\]
\item $f$ grows at most linearly exponentially at the cusps of $\Gamma_0(N)$.
\end{enumerate}
The $\C$-vector space of harmonic weak Maass forms of weight $2-k$ on $\Gamma_0(N)$ is denoted by $H_{2-k}(\Gamma_0(N))$.
\end{definition}
The following property of these functions is well known (for example, see equations $(3.2a)$ and $(3.2b)$ in \cite{BF04}).
\begin{lemma}\label{lem:split}
Let $f\in H_{2-k}(\Gamma_0(N))$ be a harmonic Maass form. Then there is a canonical splitting
\begin{equation}\label{eq:split}
f(\tau)=f^+(\tau)+\frac{(4\pi y)^{1-k}}{k-1}\overline{c_f^-(0)}+f^-(\tau),
\end{equation}
where for some $m_0,n_0\in\Z$ we have the Fourier expansions
\[f^+(\tau):=\sum\limits_{n=m_0}^\infty c_f^+(n)q^n,\]
and
\[f^-(\tau):=\sum\limits_{\substack{n=n_0 \\n\neq 0}}^\infty \overline{c_f^-(n)}n^{k-1}\Gamma(1-k;4\pi ny)q^{-n},\]
where $\Gamma(\alpha;x)$ denotes the usual incomplete Gamma-function.
\end{lemma}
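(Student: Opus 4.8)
The plan is to exploit the two analytic defining conditions of a harmonic Maass form---translation invariance and harmonicity---to reduce the statement to solving a family of second order ordinary differential equations, one for each Fourier mode, and then to read off the two solution types. First I would use that $\smallTmatrix\in\Gamma_0(N)$, so that condition (1) gives $f(\tau+1)=f(\tau)$ and hence a Fourier expansion in $x$,
\[f(\tau)=\sum_{n\in\Z}c_n(y)\,e^{2\pi i n x},\]
with smooth coefficient functions $c_n\colon(0,\infty)\to\C$. Substituting this expansion into $\Delta_{2-k}f\equiv 0$ from condition (2) and using the linear independence of the characters $e^{2\pi i n x}$ separates the equation into one ordinary differential equation for each $c_n(y)$.

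Next I would solve these equations. For $n=0$ the equation degenerates to $y\,c_0''+(2-k)\,c_0'=0$, whose solution space is spanned by the constant function, contributing the coefficient $c_f^+(0)$ to $f^+$, and by a single power of $y$, which after normalization produces the term $\tfrac{(4\pi y)^{1-k}}{k-1}\overline{c_f^-(0)}$. For $n\neq 0$ one obtains a second order linear ODE with a two-dimensional solution space. A direct substitution shows that $e^{-2\pi n y}$ is one solution, which recombines with $e^{2\pi i n x}$ into the holomorphic term $q^n$; reduction of order then yields a second, linearly independent solution that is expressible through the incomplete Gamma-function and accounts for the non-holomorphic contribution $\Gamma(1-k;4\pi n y)q^{-n}$. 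Collecting the holomorphic solutions over all $n$ assembles $f^+$, while the remaining solutions assemble the constant term together with $f^-$. The normalizing factors $n^{k-1}$ and the complex conjugations are fixed by requiring compatibility with the $\xi$-operator, which carries these non-holomorphic pieces to the Fourier coefficients of the weight-$k$ shadow.

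Finally I would invoke the growth condition (3). A priori the expansion could contain infinitely many terms of negative index, but the holomorphic solutions $q^n$ with $n<0$ and the corresponding non-holomorphic solutions grow exponentially in $y$ at a rate that increases with $|n|$; the hypothesis that $f$ grows at most linearly exponentially at the cusps therefore forces all but finitely many of them to vanish, which is exactly what produces the lower bounds $m_0$ and $n_0$ in the two expansions. The canonicity of the splitting then follows from the linear independence, as functions on $\H$, of the monomials $q^n$, the power $y^{1-k}$, and the incomplete-Gamma terms, which makes the decomposition unique.

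The main obstacle I anticipate is the explicit integration of the $n\neq 0$ equation and the identification of its second solution with the incomplete Gamma-function in the correct normalization: one must track the constants through the reduction-of-order integral and confirm that the resulting function has the claimed exponential type, so that it is genuinely distinct from the holomorphic solution and consistent with the growth bound. By comparison, the degenerate case $n=0$ and the passage from the expansion at $\infty$ to the constraints imposed at the remaining cusps (through the appropriate scaling matrices) are routine, although the bookkeeping relating the $c_f^-(n)$ to the coefficients of the shadow should be carried out with care.
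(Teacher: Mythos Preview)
Your outline is correct and is exactly the standard argument for this decomposition. Note, however, that the paper does not actually prove this lemma: it simply records the result as ``well known'' and refers to equations~(3.2a) and~(3.2b) of Bruinier--Funke~\cite{BF04}. The argument you sketch---separation of variables via periodicity, solving the resulting ODE in~$y$ for each Fourier mode, identifying the two solutions with $q^n$ and the incomplete-Gamma term, and then truncating via the growth condition---is precisely the one underlying that reference, so there is nothing to compare.
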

The series $f^+$ in the above lemma is the \emph{holomorphic part} of $f$, and it is known as a \emph{mock modular form} when the \emph{non-holomorphic part} $\tfrac{(4\pi y)^{1-k}}{k-1}\overline{c_f^-(0)}+f^-(\tau)$ doesn't vanish. Products of mock modular forms and usual modular forms, as well as linear combinations of such functions, are called \emph{mixed mock modular forms}.

The explanation for the complex conjugation of the coefficients in the non-holomorphic part $\tfrac{(4\pi y)^{1-k}}{k-1}\overline{c_f^-(0)}+f^-(\tau)$ of $f$ is given in the following proposition due to Bruinier and Funke (see Proposition 3.2 and Theorem 3.7 in \cite{BF04}).
\begin{proposition}
The operator
\[\xi_{2-k}:H_{2-k}(\Gamma_0(N))\rightarrow M_k^!(\Gamma_0(N)),\:f\mapsto\xi_{2-k}f:=2iy^{2-k}\overline{\frac{\partial f}{\partial\overline{\tau}}}\]
is well-defined and surjective with kernel $M_{2-k}^!(\Gamma_0(N))$, the space of weakly holomorphic modular forms of weight $2-k$ on $\Gamma_0(N)$. Moreover, we have that
\[(\xi_{2-k}f)(\tau)=-(4\pi)^{k-1}\sum\limits_{n=n_0}^\infty c_f^-(n)q^n.\]
\end{proposition}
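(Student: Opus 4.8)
The plan is to prove the four assertions — well-definedness, the Fourier expansion, the kernel, and surjectivity — mostly separately, with the first three reducing to a single operator identity together with a direct computation, and the last being the only genuinely substantial point. The key structural fact I would isolate at the outset is the identity $\Delta_{2-k}=-\xi_k\circ\xi_{2-k}$, which is a one-line consequence of $\partial_\tau\partial_{\overline\tau}=\tfrac14(\partial_x^2+\partial_y^2)$ and $\partial_{\overline\tau}y=\tfrac{i}{2}$.

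First I would record the transformation law of the operator. For $\gamma=\kabcd\in\SL_2(\R)$ one has $\Im(\gamma\tau)=y/|c\tau+d|^2$ and, since $\gamma\tau$ is holomorphic in $\tau$, $\partial_{\overline\tau}\bigl(f(\gamma\tau)\bigr)=\overline{(c\tau+d)}^{-2}(\partial_{\overline\tau}f)(\gamma\tau)$. A short manipulation then gives $\xi_{2-k}(f|_{2-k}\gamma)=(\xi_{2-k}f)|_{k}\gamma$, so if $f$ is $\Gamma_0(N)$-invariant of weight $2-k$ then $\xi_{2-k}f$ is $\Gamma_0(N)$-invariant of weight $k$. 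For holomorphy I would invoke the identity above: since $f$ is harmonic, $\xi_k(\xi_{2-k}f)=-\Delta_{2-k}f=0$, and because $\xi_k g=2iy^k\overline{\partial_{\overline\tau}g}$ vanishes exactly when $g$ is holomorphic, $\xi_{2-k}f$ is holomorphic on $\H$.

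Next I would apply $\xi_{2-k}$ to the canonical splitting of Lemma~\ref{lem:split}. The holomorphic part $f^+$ is annihilated by $\partial_{\overline\tau}$; for the remaining terms I would differentiate the incomplete-Gamma factor using $\tfrac{\partial}{\partial x}\Gamma(\alpha;x)=-x^{\alpha-1}e^{-x}$ and the chain rule, observing that the resulting exponential combines with $q^{-n}$ to produce $\overline{q^{n}}$, so that after multiplying by $2iy^{2-k}$ and conjugating, all powers of $y$ cancel. This yields $\xi_{2-k}f=-(4\pi)^{k-1}\sum_{n\geq n_0}c_f^-(n)q^n$, which is the asserted formula and simultaneously displays a $q$-expansion with only finitely many negative-index terms; by the transformation law the same holds at every cusp, so $\xi_{2-k}f\in M_k^!(\Gamma_0(N))$. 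For the kernel, $\xi_{2-k}f=0$ iff $\partial_{\overline\tau}f=0$, i.e. iff $f$ is holomorphic on $\H$; a holomorphic element of $H_{2-k}(\Gamma_0(N))$ is modular of weight $2-k$ and, by condition (3) of Definition~\ref{def:harmMF}, meromorphic at the cusps, hence weakly holomorphic. Conversely, any $g\in M_{2-k}^!(\Gamma_0(N))$ is holomorphic, hence harmonic (as $\Delta_{2-k}g=-\xi_k\xi_{2-k}g=0$) and of at most linear exponential growth, so $g\in H_{2-k}(\Gamma_0(N))$ with $\xi_{2-k}g=0$. Thus $\ker\xi_{2-k}=M_{2-k}^!(\Gamma_0(N))$.

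Surjectivity is the one step that is not formal, and I expect it to be the main obstacle. Given $g=\sum_{n\geq n_0}b(n)q^n\in M_k^!(\Gamma_0(N))$, the expansion formula tells me precisely which non-holomorphic data to prescribe: setting $c^-(n):=-b(n)/(4\pi)^{k-1}$ and forming the corresponding constant-plus-$f^-$ series $\Phi$, one gets $\xi_{2-k}\Phi=g$ and $\Phi$ is harmonic (since $\Delta_{2-k}\Phi=-\xi_k g=0$); the trouble is that $\Phi$ is built from the expansion at $\infty$ alone and need not be $\Gamma_0(N)$-invariant. I would remove this obstruction via the nonholomorphic Eichler integral attached to $g$, whose failure of invariance is a period cocycle valued in holomorphic functions; applying $\xi_{2-k}$ and using $g|_k\gamma=g$ shows this cocycle is a coboundary for the weight $2-k$ action, so it can be absorbed into a holomorphic correction, producing $f\in H_{2-k}(\Gamma_0(N))$ with $\xi_{2-k}f=g$. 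Alternatively, and more in keeping with Section~\ref{poincareseries}, I would construct explicit preimages: the Maass--Poincar\'e series $Q(-m,k,N;\tau)\in H_{2-k}(\Gamma_0(N))$ map under $\xi_{2-k}$ onto the holomorphic Poincar\'e series $P(m,k,N;\tau)$, and suitable such series over all $m$ and all cusps (together with a preimage of the Eisenstein part) span $M_k^!(\Gamma_0(N))$; since $\xi_{2-k}$ is conjugate-linear its image is a $\C$-subspace, so hitting a spanning set suffices. In either approach the crux is modularity of the constructed preimage, which is exactly what the Eichler-integral and Poincar\'e-series machinery is designed to guarantee.
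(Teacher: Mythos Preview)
The paper does not give its own proof of this proposition: it attributes the result to Bruinier and Funke and cites Proposition~3.2 and Theorem~3.7 of \cite{BF04}. Your treatment of well-definedness, the Fourier expansion, and the kernel is correct and is essentially the standard argument (the factorization $\Delta_{2-k}=-\xi_k\circ\xi_{2-k}$, the transformation law, and the termwise computation using $\tfrac{d}{dx}\Gamma(\alpha;x)=-x^{\alpha-1}e^{-x}$).

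Your surjectivity sketch, however, has a genuine gap in both branches. In the Eichler-integral approach you correctly observe that the obstruction cocycle $c_\gamma:=\Phi|_{2-k}\gamma-\Phi$ satisfies $\xi_{2-k}c_\gamma=g|_k\gamma-g=0$, but this only shows that $c_\gamma$ takes values in \emph{holomorphic} functions, not that it is a coboundary; the sentence ``applying $\xi_{2-k}$ \dots\ shows this cocycle is a coboundary'' conflates these two statements. The relevant $H^1$ is not obviously zero, and killing it is precisely where the work lies. In the Poincar\'e-series approach, the $Q(-m,k,N;\tau)$ map under $\xi_{2-k}$ to the cuspidal $P(m,k,N;\tau)$, so this route yields only surjectivity onto $S_k(\Gamma_0(N))$; you wave at ``a preimage of the Eisenstein part'' and at forms at other cusps, but neither is constructed, and nothing in the sketch produces preimages of forms in $M_k^!\setminus M_k$ (whose shadows force terms with $n<0$ in $f^-$, hence incomplete Gamma functions at negative argument).

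By contrast, the argument in \cite{BF04} that the paper invokes is cohomological: one sets up a short exact sequence of sheaves on the modular curve and deduces surjectivity from Serre duality, bypassing any explicit construction of preimages. If you want a constructive route, the Poincar\'e-series idea can be made to work for the cuspidal part, but you would still need separate arguments (non-holomorphic Eisenstein series for the Eisenstein part, and a genuine construction for weakly holomorphic shadows) to reach all of $M_k^!(\Gamma_0(N))$.
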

The function $-(4\pi)^{1-k}\xi_{2-k}f$ is called the \emph{shadow} of the mock modular form\footnote{We shall also call this the shadow of the harmonic Maass form $f$} $f^+$. For our purpose, we only consider cases, where the shadow of $f^+$ is a cusp form. Since the $\xi$-operator is surjective, the natural question arises on how to construct a harmonic Maass form with prescribed shadow. This can be done using Poincar\'e series which we recall next.

\subsection{Poincar\'e series}\label{poincareseries}

Here we recall the classical cuspidal Poincar\'e series and the harmonic Maass-Poincar\'e series.
We then give their relationship under the $\xi$-operator described above.

A general Poincar\'e series of weight $k$ for $\Gamma_0(N)$ is given by
\[\P(m,k,N,\varphi_m;\tau):=\sum\limits_{\gamma\in\Gamma_\infty\setminus \Gamma_0(N)}(\varphi_m^*|_k\gamma)(\tau),\]
where $m$ is an integer, $\Gamma_\infty:=\left\{\pm\left(\begin{smallmatrix} 1 & n \\ 0 & 1 \end{smallmatrix}\right)\: :\: n\in\Z \right\}$ is the subgroup of translations in $\Gamma_0(N)$, and $\varphi_m^*(\tau):=\varphi_m(y)\exp(2\pi imx)$ for a function $\varphi_m:\R_{>0}\rightarrow\C$ which is $O(y^A)$ as $y\rightarrow 0$ for some $A\in\R$. We
distinguish two special cases ($m>0$),
\begin{align}
P(m,k,N;\tau)&:=\P(m,k,N,e^{-my};\tau)\\
Q(-m,k,N;\tau)&:=\P(-m,2-k,N,\calM_{1-\frac k2}(-4\pi my);\tau),
\end{align}
where $\calM_s(y)$ is defined in terms of the $M$-Whittaker function. We often refer to $Q(-m,k,N;\tau)$ as a \emph{Maass-Poincar\'e series}.

Next we give the Fourier expansions of the Poincar\'e series $P(m,k,N;\tau)$ and $Q(-m,k,N;\tau)$.
In order to do so, let us define the \emph{Kloosterman sums} by
\begin{equation}
K(m,n,c):=
\sum_{v(c)^{\times}} e\left(\frac{m\overline
v+nv}{c}\right),
\end{equation}
where $e(\alpha):=e^{2\pi i \alpha}$. The
 sum over $v$ runs through the primitive residue classes
modulo $c$, and $\overline v$ denotes the multiplicative inverse
of $v$ modulo $c$.

The Fourier expansions of the cuspidal Poincar\'e series (for
example, see \cite{Iwaniecbook})
are described in terms of infinite sums of Kloosterman sums weighted by
$J$-Bessel functions.

\begin{lemma}\label{Hfourier} If $k\geq 2$ is even and $m, N\in \N$, then the Poincar\'e series $P(m,k,N;\tau)$ is in 
$S_k(\Gamma_0(N))$ with a Fourier expansion
\begin{equation}
P(m,k,N;\tau)=:q^{m}+\sum_{n=1}^{\infty} a(m,k,N;n)q^n,
\end{equation}
with
\begin{displaymath}
a(m,k,N;n)=2\pi
(-1)^{\frac{k}{2}}\left(\frac{n}{m}\right)^{\frac{k-1}{2}} \cdot
\sum_{\substack{c>0\\c\equiv 0\pmod{N}}} \frac{K(m,n,c)}{c}\cdot
J_{k-1} \left(\frac{4\pi \sqrt{mn}}{c}\right).
\end{displaymath}
\end{lemma}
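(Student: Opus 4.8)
The plan is to carry out the classical unfolding computation, as in \cite{Iwaniecbook}, specialized to $\Gamma_0(N)$. First I would settle convergence and modularity. For $k>2$ the defining series converges absolutely and locally uniformly on $\H$, since each term satisfies $|(\varphi_m^*|_k\gamma)(\tau)|\ll|c\tau+d|^{-k}$ and $\sum_{(c,d)}|c\tau+d|^{-k}<\infty$; this yields a holomorphic function transforming with weight $k$ under $\Gamma_0(N)$, and because the seed $q^m$ with $m>0$ decays at every cusp, one gets $P(m,k,N;\tau)\in S_k(\Gamma_0(N))$. The case $k=2$ lies exactly at the boundary of absolute convergence and must be handled by Hecke's trick: insert a convergence factor $|c\tau+d|^{-2s}$, run the computation below for $\Re(s)$ large, and analytically continue to $s=0$, where the would-be nonholomorphic correction vanishes. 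For the coset structure, note that the left action of $\Gamma_\infty$ fixes the bottom row $(c,d)$ of $\gamma\in\Gamma_0(N)$, so (up to the sign absorbed by $-I\in\Gamma_\infty$) the nontrivial cosets in $\Gamma_\infty\backslash\Gamma_0(N)$ are indexed by pairs $(c,d)$ with $c>0$, $N\mid c$, $\gcd(c,d)=1$, while the identity coset contributes exactly $q^m$.

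Next I would compute the Fourier coefficients by unfolding. Writing
\[a(n)\,e^{-2\pi ny}=\int_0^1 P(m,k,N;\tau)\,e^{-2\pi inx}\,dx,\]
I split $d=d_0+c\ell$ with $0\le d_0<c$, $\gcd(c,d_0)=1$, and $\ell\in\Z$; since $\gamma_{c,d}=\gamma_{c,d_0}\smallTmatrix^{\,\ell}$, the residual translation freedom $\tau\mapsto\tau+\ell$ unfolds the integral over $[0,1]$ into one over all of $\R$, reducing the nontrivial contribution to
\[a(n)\,e^{-2\pi ny}=\sum_{\substack{c>0\\ N\mid c}}\ \sum_{\substack{d\bmod c\\ \gcd(c,d)=1}}\int_{-\infty}^{\infty}(c\tau+d)^{-k}\,e^{2\pi im\gamma_{c,d}\tau}\,e^{-2\pi inx}\,dx.\]
Using $\gamma_{c,d}\tau=\tfrac{a}{c}-\tfrac{1}{c(c\tau+d)}$ together with the substitution $w=\tau+d/c$, the factor $(c\tau+d)^{-k}$ becomes $c^{-k}w^{-k}$, and the two $d$-dependent phases $e(ma/c)$ and $e(nd/c)$ combine, via $a\equiv\overline{d}\pmod c$, into the character sum $\sum_{d}e\!\big(\tfrac{m\overline d+nd}{c}\big)=K(m,n,c)$.

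It then remains to evaluate the archimedean integral
\[\int_{-\infty}^{\infty}(u+iy)^{-k}\,e^{-2\pi inu}\,e^{-2\pi im/(c^2(u+iy))}\,du,\]
which is the computational heart of the argument and the step I expect to be the main obstacle. Shifting the contour shows it vanishes for $n\le0$ (reconfirming holomorphicity and cuspidality), while for $n>0$ the classical Bessel-integral evaluation produces the factor $e^{-2\pi ny}$, which cancels the left-hand side, the constant $2\pi(-1)^{k/2}$, the power $(nc^2/m)^{(k-1)/2}=c^{k-1}(n/m)^{(k-1)/2}$, and $J_{k-1}\!\big(4\pi\sqrt{mn}/c\big)$ (the effective shift being $m/c^2$). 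Combining with the prefactor $c^{-k}K(m,n,c)$, the powers of $c$ collapse to $c^{-1}$, and summing over $c$ gives precisely
\[a(m,k,N;n)=2\pi(-1)^{k/2}\Big(\tfrac{n}{m}\Big)^{\frac{k-1}{2}}\sum_{\substack{c>0\\ c\equiv0\ (N)}}\frac{K(m,n,c)}{c}\,J_{k-1}\!\Big(\tfrac{4\pi\sqrt{mn}}{c}\Big),\]
as claimed. The only genuinely nontrivial inputs are the exact constant in the Bessel evaluation and the weight-$2$ convergence via Hecke's trick; everything else is bookkeeping with the coset parametrization and the Kloosterman-sum identity.
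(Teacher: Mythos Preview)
Your proposal is correct and follows exactly the classical unfolding argument; the paper does not actually prove this lemma but simply cites \cite{Iwaniecbook}, and what you have sketched is precisely the computation one finds there. The only point worth flagging is notational: the expansion in the lemma writes $q^m$ separately and then a full sum $\sum_{n\ge1}a(m,k,N;n)q^n$, so $a(m,k,N;m)$ is understood to record only the non-identity coset contribution, consistent with how you separated the identity coset at the outset.
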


Now we recall one family of Maass-Poincar\'e series which have
appeared in earlier works (for example, see \cite{BringOPNAS, Bruinier, Fay,
Hejhal, Niebur1}).

\begin{lemma}\label{Ffourier} If $k\geq 2$ is even and $m, N\geq 1$, then
$Q(-m,k,N;\tau)$ is in $H_{2-k}(\Gamma_0(N))$. Moreover, if $I_{k-1}$ is the
usual $I$-Bessel function, then
$$
Q(-m,k,N;\tau) = (1-k) \left( \Gamma(k-1,4\pi my) - \Gamma(k-1) \right)
\, q^{-m} + \sum_{n\in\Z} c_m(n,y) \, q^n.
$$

\noindent 1) If $n<0$, then
\begin{displaymath}
\begin{split}
c_m(n,y)
  =2 \pi i^{k}  (1-k) \, &\Gamma(k-1,4 \pi |n| y)
  \left|\frac{n}{m}\right|^{\frac{1-k}{2}}\\
 &\ \ \ \ \times  \sum_{\substack{c>0\\c\equiv 0\pmod{N}}}
   \frac{K(-m,n,c)}{c}\cdot
 J_{k-1}\!\left(\frac{4\pi\sqrt{|mn|}}{c}\right).
\end{split}
\end{displaymath}

\noindent
2) If $n>0$, then
$$
c_m(n,y)= - 2 \pi  i^k \Gamma(k)   \left( \frac{n}{m}
\right)^{\frac{1-k}{2}}
  \sum_{\substack{c>0\\c\equiv 0\pmod{N}}}
   \frac{K(-m,n,c)}{c}\cdot
 I_{k-1}\!\left(\frac{4\pi\sqrt{|mn|}}{c}\right).
$$

\noindent 3) If $n=0$, then
$$
c_m(0,y)=-(2\pi i)^{k} m^{k-1} \sum_{\substack{c>0\\c\equiv
0\pmod{N}}}
 \frac{K(-m,0,c)}{c^k}.
$$
\end{lemma}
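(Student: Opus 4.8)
The plan is to carry out the classical unfolding computation for the Poincar\'e series, in the style of the works cited just before the lemma. Write the seed as $\psi(\tau):=\calM_{1-\frac k2}(-4\pi my)\,e(-mx)$, so that by definition
$Q(-m,k,N;\tau)=\sum_{\gamma\in\Gamma_\infty\backslash\Gamma_0(N)}(\psi|_{2-k}\gamma)(\tau)$, and then read off its Fourier expansion.

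First I would record the two analytic facts that make $\psi$ the correct seed and yield membership in $H_{2-k}(\Gamma_0(N))$. Since $\calM_s$ is built from the $M$-Whittaker function, which solves Whittaker's differential equation, the product $\psi$ is an eigenfunction of the weight $2-k$ hyperbolic Laplacian $\Delta_{2-k}$; at the chosen parameter $1-\frac k2$ the eigenvalue is $0$, so $\psi$ is harmonic. Because $\Delta_{2-k}$ commutes with the slash action $|_{2-k}\gamma$, the entire series lies in $\ker\Delta_{2-k}$, which is condition (2) of Definition~\ref{def:harmMF}. For convergence I would use the decay of the slashed seed (valid for $k>2$); the weight $k=2$ boundary, and any non-convergent range, are handled by inserting an auxiliary spectral parameter $s$ into the seed and continuing analytically, following Niebur and Fay. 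Local uniform convergence then gives $\Gamma_0(N)$-invariance (condition (1)) and at most linear exponential growth at the cusps (condition (3)) for free, so $Q(-m,k,N;\tau)\in H_{2-k}(\Gamma_0(N))$.

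Second, I would compute the Fourier expansion by splitting the coset sum according to the lower-left entry $c$ of $\gamma$. The coset with $c=0$ is the identity modulo $\Gamma_\infty$ (the sign ambiguity from $\pm\smallTmatrix$ is harmless since $2-k$ is even), and it contributes the seed $\psi(\tau)$ itself; rewriting $\calM_{1-\frac k2}(-4\pi my)$ in terms of the incomplete Gamma function produces exactly the principal term $(1-k)\bigl(\Gamma(k-1,4\pi my)-\Gamma(k-1)\bigr)q^{-m}$. For each fixed $c>0$ with $N\mid c$, I would parametrize the remaining representatives by residues $d\bmod c$ with $\gcd(c,d)=1$ together with the leftover integer translations. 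Expanding $(\psi|_{2-k}\gamma)$ and taking its $n$th Fourier coefficient, the sum of the additive characters over the residue classes $d$ assembles precisely the Kloosterman sum $K(-m,n,c)$, while the integral over $x\in\R$ coming from the translation action reduces each coefficient to a single Whittaker-function integral.

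The last and genuinely technical step is the evaluation of that integral. Applying the standard integral representations that relate $M$-Whittaker functions to Bessel functions, the $x$-integral evaluates to $J_{k-1}\!\bigl(4\pi\sqrt{|mn|}/c\bigr)$ when $n<0$, to $I_{k-1}\!\bigl(4\pi\sqrt{|mn|}/c\bigr)$ when $n>0$, and to the elementary term carrying the factor $c^{-k}$ when $n=0$. Matching the resulting prefactors---the power $|n/m|^{(1-k)/2}$, the constants $2\pi i^{k}$ and $\Gamma(k-1)$ or $\Gamma(k)$, the sign $(1-k)$, and the $(2\pi i)^k m^{k-1}$ in the $n=0$ case---against the claimed formula is the heart of the argument. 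I expect this constant-chasing, together with pinning down the precise normalization of $\calM_s$ and treating the $k=2$ case via analytic continuation, to be the main obstacle; the structural steps (harmonicity, convergence, coset decomposition, and the emergence of Kloosterman sums) are routine.
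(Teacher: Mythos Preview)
The paper does not actually prove this lemma; it is stated as a recollection of known results with citations to \cite{BringOPNAS, Bruinier, Fay, Hejhal, Niebur1}, and no argument is given. Your sketch is precisely the standard unfolding computation carried out in those references, so there is nothing to compare against in the paper itself---your outline \emph{is} the proof one finds in the cited literature, and the structural steps (harmonicity of the seed, coset decomposition by $c$, emergence of Kloosterman sums, Whittaker-to-Bessel integral) are all correctly identified.
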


These two families of Poincar\'e series are closely related (for example, see  Theorem 1.1 in \cite{BringOPNAS}).

\begin{lemma}\label{poincarerelationships}
If $k\geq 2$ is even and $m, N\geq 1$, then
\begin{displaymath}
\xi_{2-k}(Q(-m,k,N;\tau))= ( 4\pi)^{k-1} m^{k-1}(k-1)\cdot
P(m,k,N;\tau)\in S_k(\Gamma_0(N)).
\end{displaymath}
\end{lemma}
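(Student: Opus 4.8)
The plan is to use the fact that both $Q(-m,k,N;\tau)$ and $P(m,k,N;\tau)$ are Poincar\'e series for $\Gamma_0(N)$ and that $\xi_{2-k}$ intertwines the two relevant slash actions. Concretely, directly from the definition $\xi_{2-k}f=2iy^{2-k}\overline{\partial f/\partial\overline\tau}$ together with the cocycle identity for the automorphy factor one has, for $\gamma\in\SL_2(\R)$,
\[
\xi_{2-k}\bigl(f|_{2-k}\gamma\bigr)=\bigl(\xi_{2-k}f\bigr)\big|_{k}\gamma .
\]
Writing $Q(-m,k,N;\tau)=\sum_{\gamma\in\Gamma_\infty\backslash\Gamma_0(N)}(\varphi^*|_{2-k}\gamma)(\tau)$ with seed $\varphi^*(\tau)=\calM_{1-\frac k2}(-4\pi m y)\,e(-mx)$ and interchanging $\xi_{2-k}$ with the sum, I obtain that $\xi_{2-k}Q(-m,k,N;\tau)$ is itself a weight $k$ Poincar\'e series with seed $\xi_{2-k}\varphi^*$. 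The interchange is legitimate because the series and its termwise first derivatives converge locally uniformly on $\H$; for $k=2$ this is precisely the point at which one invokes Hecke's regularization, exactly as in the cited works \cite{BringOPNAS, Bruinier}.

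The heart of the matter is the local computation of $\xi_{2-k}\varphi^*$. Setting $g(y):=\calM_{1-\frac k2}(-4\pi m y)$ and writing $\partial/\partial\overline\tau=\tfrac12(\partial_x+i\partial_y)$, a direct calculation gives
\[
\xi_{2-k}\varphi^*(\tau)=y^{2-k}\bigl(g'(y)-2\pi m\,g(y)\bigr)\,e(mx).
\]
Observe that the factor $e(-mx)$ in the seed has turned into $e(mx)$ after the complex conjugation in $\xi_{2-k}$, which is exactly why the index $-m$ Maass--Poincar\'e series maps to the index $+m$ cuspidal one. It then remains to verify the Whittaker identity
\[
y^{2-k}\bigl(g'(y)-2\pi m\,g(y)\bigr)=(4\pi)^{k-1}m^{k-1}(k-1)\,e^{-2\pi m y},
\]
so that $\xi_{2-k}\varphi^*=(4\pi)^{k-1}m^{k-1}(k-1)\,q^{m}$ is the asserted constant times the seed $q^m=e^{2\pi i m\tau}$ of $P(m,k,N;\tau)$. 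This identity is the reason the normalization $\calM_{1-\frac k2}$ was chosen in the first place: inserting the definition of $\calM_s$ in terms of the $M$-Whittaker function and applying the standard first-order derivative relations for $M_{\kappa,\mu}$, the lowering operator $\partial_y-2\pi m$ collapses the confluent hypergeometric seed onto the holomorphic exponential, with the constant produced by the gamma-factors in the Whittaker normalization.

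Combining the two steps yields
\[
\xi_{2-k}Q(-m,k,N;\tau)=(4\pi)^{k-1}m^{k-1}(k-1)\sum_{\gamma\in\Gamma_\infty\backslash\Gamma_0(N)}\bigl(q^{m}\big|_{k}\gamma\bigr)(\tau)=(4\pi)^{k-1}m^{k-1}(k-1)\,P(m,k,N;\tau),
\]
and membership in $S_k(\Gamma_0(N))$ is then immediate from Lemma~\ref{Hfourier}. I expect the main obstacle to be the second step: pinning down the precise constant $(4\pi)^{k-1}m^{k-1}(k-1)$ demands careful bookkeeping of the Whittaker normalization and sign conventions, and one must check that $g(y)$ is real-valued on the relevant range so that the conjugation in $\xi_{2-k}$ acts only on $e(-mx)$. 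As an independent verification, one may instead compare Fourier coefficients: by the explicit formula for $\xi_{2-k}$ recalled above, $\xi_{2-k}Q=-(4\pi)^{k-1}\sum_n c^-(n)q^n$, where the $c^-(n)$ are read off from the non-holomorphic coefficients $c_m(n,y)$ in part (1) of Lemma~\ref{Ffourier}; matching the resulting $J_{k-1}$-Bessel/Kloosterman expressions against the coefficients $a(m,k,N;n)$ of Lemma~\ref{Hfourier}, using the standard reality and symmetry relations for the Kloosterman sums $K(m,n,c)$, reproduces the same constant.
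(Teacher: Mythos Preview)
Your argument is correct, but it takes a different route from the paper. The paper's proof is a one-line appeal to the explicit Fourier expansions in Lemmas~\ref{Hfourier} and~\ref{Ffourier}: since the action of $\xi_{2-k}$ on a harmonic Maass form is $-(4\pi)^{k-1}\sum_n c^-(n)q^n$, one simply reads off the coefficients $c^-(n)$ from the incomplete-Gamma terms of Lemma~\ref{Ffourier} (the leading term and part~(1)) and matches them against the Kloosterman/$J_{k-1}$-Bessel expansion of $P(m,k,N;\tau)$ in Lemma~\ref{Hfourier}. This is exactly the ``independent verification'' you sketch at the end of your proposal.

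Your primary argument instead exploits the intertwining relation $\xi_{2-k}(f|_{2-k}\gamma)=(\xi_{2-k}f)|_k\gamma$ to reduce everything to a local computation on the Whittaker seed. This is more conceptual and explains \emph{why} the two Poincar\'e series are paired, rather than verifying it after the fact. The price is twofold: you must justify the termwise differentiation of the Poincar\'e sum (genuinely delicate when $k=2$, as you note), and you must carry out the Whittaker identity $y^{2-k}(g'(y)-2\pi m\,g(y))=(4\pi)^{k-1}m^{k-1}(k-1)e^{-2\pi m y}$ with the correct normalization of $\calM_{1-k/2}$, which you state but do not fully verify. The paper's approach sidesteps both issues by taking the Fourier expansions as given, at the cost of relying on the (more laborious) Lemmas~\ref{Hfourier} and~\ref{Ffourier} as black boxes.
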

\begin{proof} The claim follows easily from
the explicit  expansions in Lemma~\ref{Hfourier} and
\ref{Ffourier} and the definition of $\xi_{2-k}$.
\end{proof}

We conclude this section with a well-known result about the behaviour of the Maass-Poincar\'e series at the cusps (see Proposition 3.1 in \cite{BJO06}).
\begin{lemma}\label{growthQ}
For $k\geq 2$ and $m,N\geq 1$, the harmonic Maass form $Q(-m,k,N;\tau)$ grows like $\exp(2\pi m y)$ as $\tau$ approaches the cusp $i\infty$ and grows moderately approaching any other cusp of $\Gamma_0(N)$. The cuspidal Poincar\'e series $P(m,k,N;\tau)$ decays like $\exp(-2\pi y)$ as $\tau$ approaches the cusp $i\infty$, and (linearly) exponentially approaching any other cusp. 
\end{lemma}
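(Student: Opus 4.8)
The plan is to separate the distinguished cusp $i\infty$ from all other cusps, reading off the former from the explicit Fourier expansions and analyzing the latter directly from the Poincar\'e series definition.

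First I would dispose of $i\infty$ using Lemmas~\ref{Hfourier} and \ref{Ffourier}. For $P(m,k,N;\tau)=q^m+\sum_{n\geq 1}a(m,k,N;n)q^n$ the lowest term is $q^m$, of absolute value $e^{-2\pi m y}$, and the absence of a constant term forces exponential decay (with rate exactly $2\pi$ when $m=1$). For $Q(-m,k,N;\tau)$ the only potentially growing contribution is the principal term $(1-k)\bigl(\Gamma(k-1,4\pi my)-\Gamma(k-1)\bigr)q^{-m}$. As $y\to\infty$ the incomplete Gamma value $\Gamma(k-1,4\pi my)\to 0$, so the coefficient tends to $(k-1)\Gamma(k-1)=\Gamma(k)\neq 0$, while $\lvert q^{-m}\rvert=e^{2\pi m y}$; using $\Gamma(s,x)\sim x^{s-1}e^{-x}$ one checks every remaining Fourier term is bounded or decays, so the growth is precisely of order $e^{2\pi m y}$.

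Next I would treat a cusp $\fraka$ of $\Gamma_0(N)$ that is \emph{not} $\Gamma_0(N)$-equivalent to $i\infty$, choosing a scaling matrix $\sigma_{\fraka}\in\SL_2(\R)$ with $\sigma_{\fraka}\infty=\fraka$. Slashing the Poincar\'e series and using the cocycle property of the slash gives $(\P\mid_k\sigma_{\fraka})(\tau)=\sum_{\gamma\in\Gamma_\infty\backslash\Gamma_0(N)}(\varphi_m^*\mid_k\gamma\sigma_{\fraka})(\tau)$, which I would analyze summand by summand as $y\to\infty$. Writing $\gamma\sigma_{\fraka}=\kabcd$, the key point is that the lower-left entry $c$ vanishes exactly when $\gamma\sigma_{\fraka}$ fixes $\infty$, i.e.\ when $\gamma\fraka=\infty$; since $\fraka\not\sim i\infty$ this never occurs, so $c\neq 0$ for every representative. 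For such a term $\gamma\sigma_{\fraka}\tau\to a/c\in\R$ and $\Im(\gamma\sigma_{\fraka}\tau)=y/\lvert c\tau+d\rvert^2\to 0$; since the seed $\varphi_m^*$ (built from $e^{-my}$ for $P$, and from the $M$-Whittaker function $\calM_{1-k/2}$ for $Q$, which is $\sim y^{1-k/2}$ near the real axis) grows only polynomially as its imaginary argument tends to $0$, the automorphy factor $(c\tau+d)^{k-2}$ renders each summand bounded by a fixed power of $y$. Consequently $P$ keeps the exponential decay already guaranteed by $P\in S_k(\Gamma_0(N))$ (Lemma~\ref{Hfourier}), while $Q$ carries no exponentially growing term and hence grows only polynomially, i.e.\ ``moderately,'' in accordance with Definition~\ref{def:harmMF}(3).

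The main obstacle is the final growth estimate at the non-distinguished cusps: one must justify interchanging the summation with the limit $y\to\infty$ and show that the polynomially bounded summands sum to something of at most polynomial growth, uniformly. This rests on the standard absolute-convergence estimates for Poincar\'e series (controlling the $\sum_{c}c^{-k}$-type tails through the decay of the Bessel/Whittaker seed near the real axis)---precisely the input already underlying the expansions in Lemmas~\ref{Hfourier} and \ref{Ffourier}---and requires the usual extra care in weight $k=2$, where the series converges only conditionally and one argues by Hecke's trick or analytic continuation. Alternatively, conjugating $\Gamma_0(N)$ by $\sigma_{\fraka}$ and invoking the Fourier expansion at $\fraka$ directly makes visible that the principal, exponentially growing part of the seed is supported only at $i\infty$.
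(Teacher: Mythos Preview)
The paper does not actually prove this lemma; it is stated with a reference to Proposition~3.1 of \cite{BJO06}. Your outline is essentially the standard argument one finds there: read off the behaviour at $i\infty$ from the explicit expansions of Lemmas~\ref{Hfourier} and~\ref{Ffourier}, and at any inequivalent cusp $\fraka$ observe that every $\gamma\sigma_{\fraka}$ has nonzero lower-left entry, so the seed $\varphi_m^*$ is evaluated only near the real axis and contributes no principal part.

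One small correction: for $m>1$ the ``lowest term'' of $P(m,k,N;\tau)$ is not $q^m$. The expansion in Lemma~\ref{Hfourier} reads $q^m+\sum_{n\geq 1}a(m,k,N;n)q^n$ with the sum starting at $n=1$, and $a(m,k,N;1)$ is generically nonzero, which is why the lemma asserts decay $\sim e^{-2\pi y}$ rather than $e^{-2\pi m y}$. This is harmless for your argument, since exponential decay at $i\infty$ (indeed at every cusp) already follows from $P\in S_k(\Gamma_0(N))$. Your identification of the main obstacle is also accurate: the uniform summation for $Q$ at $\fraka\not\sim i\infty$ in weight $2-k\leq 0$ genuinely rests on the absolute convergence of the Maass--Poincar\'e series furnished by the Whittaker seed, and that is exactly the content of the cited references; note that Hecke's trick pertains to $P$ in weight $k=2$ rather than to $Q$.
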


\subsection{Rankin-Cohen Brackets}
Rankin-Cohen brackets are a generalization of the usual product of modular forms. They are bilinear differential operators which map modular forms to modular forms. They were introduced independently by Rankin in \cite{Rankin2} and Cohen in \cite{Coh75}. A very nice discussion of them may be found in \cite{ZagierRCB}.
\begin{definition}\label{def:RCB}
Let $f,g:\H\rightarrow\C$ be smooth functions on the upper half-plane and $k,\ell\in\R$ be some real numbers, the \emph{weights} of $f$ and $g$. Then for a non-negative integer $\nu$ we define the $\nu$th \emph{Rankin-Cohen bracket} of $f$ and $g$ by
\[[f,g]_\nu:=\frac{1}{(2\pi i)^\nu}\sum\limits_{\mu=0}^\nu (-1)^\mu {{k+\nu-1} \choose \nu-\mu} {{\ell+\nu-1} \choose \mu} \frac{\partial^\mu f}{\partial\tau^\mu}\frac{\partial^{\nu-\mu}g}{\partial\tau }.\]
\end{definition}
The following result (see Theorem 7.1 in \cite{Coh75}) gives the connection of this definition to modular forms.
\begin{theorem}[Cohen]
Let $f,g$ as in Definition~\ref{def:RCB} with integral weights $k,\ell$. Then the $\nu$th Rankin-Cohen bracket commutes with the slash operator, i.e.
\[[(f|_k\gamma),(g|_\ell\gamma)]_\nu=([f,g]_\nu)|_{k+\ell+2\nu}\gamma\]
for all $\gamma\in\SL_2(\R)$. In particular, if $f$ and $g$ are modular of their respective weights, then $[f,g]_\nu$ is modular of weight $2\nu+k+\ell$.
\end{theorem}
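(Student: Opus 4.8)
The plan is to reduce everything to a single transformation law for iterated derivatives under the slash operator, and then to verify one binomial identity. Write $\gamma=\smallm{a}{b}{c}{d}\in\SL_2(\R)$, set $j(\tau):=c\tau+d$, and abbreviate $D:=\frac{1}{2\pi i}\frac{\partial}{\partial\tau}$, so that the bracket takes the clean form
\[[f,g]_\nu=\sum_{\mu=0}^\nu (-1)^\mu\binom{k+\nu-1}{\nu-\mu}\binom{\ell+\nu-1}{\mu}\,D^\mu f\cdot D^{\nu-\mu}g.\]
First I would establish the elementary commutation relation
\[D(f|_k\gamma)=(Df)|_{k+2}\gamma-\frac{k}{2\pi i}\,\frac{c}{j(\tau)}\,(f|_k\gamma),\]
which follows directly from $\frac{d}{d\tau}(\gamma\tau)=j(\tau)^{-2}$, $\frac{d}{d\tau}j(\tau)=c$, and $ad-bc=1$. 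Iterating it (an induction on $\mu$ using $D(c/j)=-\frac{1}{2\pi i}(c/j)^2$, which collapses at each step to Pascal's rule) yields the key lemma
\[D^\mu(f|_k\gamma)=\sum_{r=0}^\mu \binom{\mu}{r}\frac{(-1)^r (k+\mu-r)_r}{(2\pi i)^r}\left(\frac{c}{j(\tau)}\right)^r (D^{\mu-r}f)|_{k+2(\mu-r)}\gamma,\]
where $(x)_r:=x(x+1)\cdots(x+r-1)$.

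Next I would substitute this formula, together with its analogue for $D^{\nu-\mu}(g|_\ell\gamma)$, into $[(f|_k\gamma),(g|_\ell\gamma)]_\nu$, using the elementary product rule $(P|_a\gamma)(Q|_b\gamma)=(PQ)|_{a+b}\gamma$. Grouping the resulting triple sum by the total \emph{anomaly order} $p:=r+r'$ (the total power of $c/j$), each homogeneous piece is a multiple of $(c/j)^p\,[(D^a f)(D^b g)]|_{k+\ell+2\nu}\gamma$ with $a+b+p=\nu$. The piece $p=0$ (i.e.\ $r=r'=0$) collapses, by the very definition of the bracket, to $([f,g]_\nu)|_{k+\ell+2\nu}\gamma$, which is precisely the desired main term.

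The crux is therefore to show that every piece with $p\geq1$ vanishes identically. After cancelling the common factors, this amounts to proving that for all $a,b\geq0$ with $a+b+p=\nu$ and $p\geq1$,
\[\sum_{r=0}^{p}(-1)^{r}\binom{k+\nu-1}{b+p-r}\binom{\ell+\nu-1}{a+r}\binom{a+r}{r}\binom{b+p-r}{p-r}(k+a)_r(\ell+b)_{p-r}=0.\]
This is a Vandermonde-type hypergeometric identity, and establishing it is the main obstacle of the proof; I expect to prove it either by recognizing the inner sum as a terminating ${}_2F_1$ evaluated at a singular point, or, more slickly, by the Cohen--Kuznetsov generating series. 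In the latter approach one attaches to any weight-$m$ function $h$ the series $\tilde h(\tau,X):=\sum_{n\geq0}\frac{D^n h(\tau)}{n!\,(m)_n}X^n$; the commutation lemma above is exactly equivalent to a clean Jacobi-type law in which the slash operator applied to $\tilde h$ produces a single exponential factor of the shape $\exp\!\big(\mathrm{const}\cdot cX/j(\tau)\big)$. Expanding the product $\tilde f\cdot\tilde g$ as a sum of Cohen--Kuznetsov series attached to the forms $[f,g]_\nu$ then makes the cancellation of these exponential (anomaly) factors automatic, so that the binomial identity need not be verified by hand.

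Finally, the ``in particular'' clause is immediate: if $f,g$ are modular of weights $k,\ell$ on $\Gamma_0(N)$, then $f|_k\gamma=f$ and $g|_\ell\gamma=g$ for all $\gamma\in\Gamma_0(N)$, so the identity just established reads $[f,g]_\nu=([f,g]_\nu)|_{k+\ell+2\nu}\gamma$, which is exactly the statement that $[f,g]_\nu$ is modular of weight $k+\ell+2\nu$.
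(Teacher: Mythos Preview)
The paper does not give its own proof of this theorem; it simply records the statement and cites Theorem~7.1 of Cohen's paper \cite{Coh75}. So there is nothing to compare against on the paper's side.

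Your proposal is correct and follows one of the two standard routes. The iterated-derivative lemma you state is right (and the induction you sketch goes through), and the reduction to a single binomial/hypergeometric identity for each anomaly order $p\geq 1$ is exactly how the direct computation is organized. The Cohen--Kuznetsov generating series approach you mention as the ``slick'' alternative is precisely the argument Zagier gives in \cite{ZagierRCB} (already in the paper's bibliography): one checks that $\tilde h_{m}(\tau,X)=\sum_{n\geq 0}\frac{D^n h(\tau)}{n!\,(m)_n}X^n$ transforms under $\gamma$ by picking up a factor $\exp\!\big(\tfrac{1}{2\pi i}\tfrac{cX}{c\tau+d}\big)$ independent of $m$, so in the product $\tilde f_k(\tau,-X)\,\tilde g_\ell(\tau,X)$ these exponentials cancel and the coefficient of $X^\nu$ is, up to a harmless constant, $[f,g]_\nu$ transforming with weight $k+\ell+2\nu$. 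That route avoids having to verify your displayed identity by hand, and I would recommend writing it up that way; the direct identity can be proved (it is a terminating ${}_2F_1$ at $1$ via Chu--Vandermonde after rewriting $(k+a)_r$ and $(\ell+b)_{p-r}$ as ratios of binomials), but the bookkeeping is unpleasant and easy to get wrong.
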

\begin{remark}
The $0$th Rankin-Cohen bracket of $f$ and $g$ is just their usual product, the first bracket defines a Lie bracket on the graded algebra of real-analytic modular forms, giving it the structure of a so-called \emph{Poisson algebra}.
\end{remark}
\begin{remark}
The $\nu$th Rankin-Cohen bracket is symmetric in $f$ and $g$ if $\nu$ is even, otherwise it is antisymmetric.
\end{remark}
Since the Rankin-Cohen bracket provides a ``product'' on the algebra of modular forms, it is natural to consider functions of the form $[f^+,g]_\nu$ for $f\in H_{2-k}(\Gamma_0(N))$ and $g\in M_\ell(\Gamma_0(N))$ to be \emph{mixed mock modular forms} as well.

An interesting observation, that doesn't seem to appear in the literature, is the following property of Rankin-Cohen brackets of harmonic Maass forms and holomorphic modular forms.
\begin{lemma}\label{Maass}
Let $f\in H_{2-k}(\Gamma_0(N))$ and $g\in M_\ell(\Gamma_0(N))$ for even, positive integers $k,\ell$. Then for $\nu>k-2$ the Rankin-Cohen bracket $[f,g]_\nu$ is a weakly holomorphic modular form of weight $2\nu-k+\ell+2$.
\end{lemma}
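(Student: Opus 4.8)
The plan is to exploit Cohen's theorem together with the harmonic structure of $f$. By Cohen's theorem (stated above for smooth $f,g$ with integral weights), the bracket $[f,g]_\nu$ transforms like a modular form of weight $2\nu-k+\ell+2$ on $\Gamma_0(N)$, regardless of holomorphicity. Since this weight is an integer and the bracket of a harmonic Maass form with a holomorphic modular form grows at most linearly exponentially at the cusps, so that its Fourier expansion at each cusp has only finitely many terms of negative index, it will suffice to prove that $[f,g]_\nu$ is \emph{holomorphic} on $\H$; the transformation and growth properties then force $[f,g]_\nu$ to be weakly holomorphic of the asserted weight. Equivalently, I would show that $\partial_{\bar\tau}[f,g]_\nu\equiv 0$. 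Because $g\in M_\ell(\Gamma_0(N))$ is holomorphic, applying $\partial_{\bar\tau}$ term-by-term to Definition~\ref{def:RCB} annihilates every factor $\partial_\tau^{\nu-\mu}g$ and leaves only the factors $\partial_{\bar\tau}\partial_\tau^\mu f=\partial_\tau^\mu(\partial_{\bar\tau}f)$, so everything reduces to understanding the holomorphic derivatives of $\partial_{\bar\tau}f$.

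The key step is to compute $\partial_{\bar\tau}f$ through the $\xi$-operator. From the defining relation $\xi_{2-k}f=2iy^{2-k}\overline{\partial_{\bar\tau}f}$ one obtains $\partial_{\bar\tau}f=-\tfrac{1}{2i}y^{k-2}\,\overline{\xi_{2-k}f}$, where $\xi_{2-k}f\in M_k^!(\Gamma_0(N))$ is holomorphic. Writing $y=(\tau-\bar\tau)/(2i)$, the factor $y^{k-2}$ is a polynomial in $\tau$ of degree $k-2$ with $\bar\tau$-dependent coefficients, while $\overline{\xi_{2-k}f}$ is anti-holomorphic and hence annihilated by $\partial_\tau$. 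Therefore $\partial_\tau^\mu(\partial_{\bar\tau}f)=-\tfrac{1}{2i}\,\overline{\xi_{2-k}f}\;\partial_\tau^\mu(y^{k-2})$ vanishes as soon as $\mu\geq k-1$. This is precisely where the hypothesis $k\geq 2$, guaranteeing $k-2\geq 0$ so that $y^{k-2}$ is an honest polynomial of the stated degree, enters.

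Finally, I would combine this vanishing with the shape of the Rankin-Cohen coefficients. In $[f,g]_\nu$ the $\mu$th summand carries the binomial $\binom{(2-k)+\nu-1}{\nu-\mu}=\binom{\nu+1-k}{\nu-\mu}$; since $\nu>k-2$ forces $\nu+1-k\geq 0$, this is a genuine non-negative binomial coefficient, and it vanishes unless $\nu-\mu\leq\nu+1-k$, i.e. unless $\mu\geq k-1$. Thus in $\partial_{\bar\tau}[f,g]_\nu$ every summand dies: for $\mu\leq k-2$ the binomial coefficient is zero, and for $\mu\geq k-1$ the derivative factor $\partial_\tau^\mu(\partial_{\bar\tau}f)$ is zero. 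Hence $\partial_{\bar\tau}[f,g]_\nu\equiv 0$, which by the reductions above finishes the proof. The main (and essentially only) subtlety is this dovetailing: the single threshold $\mu=k-1$ simultaneously governs when the derivative factor vanishes and when the Rankin-Cohen binomial switches on, and it is exactly the assumption $\nu>k-2$ that places $\nu+1-k$ among the ordinary non-negative binomial coefficients, so that the two vanishing regions together cover the entire range $0\leq\mu\leq\nu$.
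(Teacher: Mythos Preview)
Your argument is correct. Both you and the paper reduce to showing that $[f,g]_\nu$ is annihilated by $\partial_{\bar\tau}$ (equivalently, by the lowering operator $L_{2\nu-k+\ell+2}=-2iy^2\partial_{\bar\tau}$), using that $g$ is holomorphic and that $\partial_{\bar\tau}f$ has the special shape ``antiholomorphic times $y^{k-2}$''. The difference is in the packaging of the combinatorics. The paper first establishes the closed identity
\[
L_{2\nu-k+\ell+2}\bigl([f,g]_\nu\bigr)=\frac{1}{(4\pi)^\nu}\binom{k-2}{\nu}\,L_{2-k}(f)\,R_\ell^\nu(g),
\]
obtained via the iterated raising operator formula, so the vanishing comes from the \emph{single} binomial $\binom{k-2}{\nu}$. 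You instead treat the terms of the bracket individually and exhibit the dovetailing: for $\mu\le k-2$ the Rankin--Cohen binomial $\binom{\nu+1-k}{\nu-\mu}$ vanishes (here you crucially use $\nu>k-2$ so that the upper entry is a non-negative integer), while for $\mu\ge k-1$ the factor $\partial_\tau^\mu(y^{k-2})$ vanishes. Your route is more elementary in that it avoids the raising-operator identity and the auxiliary computation of $R_\ell^\nu$; the paper's route has the advantage of yielding a clean formula for $L\bigl([f,g]_\nu\bigr)$ valid for all $\nu$, from which the lemma is read off as the special case where the prefactor dies.
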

\begin{proof}
Consider the operators
\begin{align}
\label{eq:raising}
R_k&:=2i\frac{\partial}{\partial\tau}+ky^{-1},\\
\label{eq:lowering}
L_k&:=-2iy^2\frac{\partial}{\partial\overline{\tau}},
\end{align}
known as the \emph{Maass raising operator}, resp. \emph{Maass lowering operator}. These operators map modular forms of weight $k$ to modular forms of weight $k+2$ (resp. $k-2$).

An explicit description of the iterated raising operator $R_k^n:=R_{k+2(n-1)}\circ\dots R_{k+2}\circ R_k$ for $n>0$, $R_k^0:=\operatorname{id}$ is given by (see \cite{LZ01}, Equation (4.15))
\[R_k^n=\sum\limits_{m=0}^n {n \choose m} \frac{\Gamma(k+n)}{\Gamma(k+n-m)}y^{-m}(2i)^{n-m}\frac{\partial^{n-m}}{\partial\tau^{n-m}},\]
which follows easily by induction on $n$.

With this one obtains by straightforward calculation that if $f\in H_{2-k}(\Gamma_0(N))$ and $g\in M_\ell(\Gamma_0(N))$, then we have that
\[L_{2\nu-k+\ell+2}([f,g]_\nu)=\frac{1}{(4\pi)^\nu}{{k-2} \choose {\nu}} L_{2-k}(f)R_\ell^\nu (g).\]
But for $k\in\N$, this is $0$ whenever $\nu>k-2$, thus $[f,g]_\nu$ must be holomorphic.
\end{proof}

\section{Holomorphic Projection and the Proof of Theorem~\ref{mainthm}}\label{proofs}

Here we prove Theorem~\ref{mainthm}. We begin by recalling the principle of
{\it holomorphic projection.}

\subsection{Holomorphic Projection}\label{sec:holproj}

The basic idea behind holomorphic projection is the following. Suppose you have a weight $k$ real-analytic modular form $\tilde{f}$ on $\Gamma_0(N)$ with moderate growth at the cusps. Then this function defines a linear functional on the space of weight $k$ holomorphic cusp forms by $g\mapsto \langle g,\tilde{f}\rangle$, where $\langle\cdot,\cdot\rangle$ denotes the Petersson scalar product. But this functional must be given by $\langle\cdot,f\rangle$ for some holomorphic cusp form $f$ of the same weight. This $f$ is essentially the holomorphic projection of $f$. In \cite{St80}, Sturm introduced this notion, which was used and further developed later for example in the seminal work of Gross and Zagier on Heegner points and derivatives of $L$-function \cite{GrZ86}. The technique is also used in order to obtain recurrence relations among Fourier coefficients of mock modular forms, see e.g. \cite{IRR13, Mertpreprint}.

Here we briefly recall this framework for holomorphic projections.

\begin{definition}\label{def:pihol}
Let $f:\H\rightarrow\C$ be a (not necessarily holomorphic) modular form of weight $k$ on $\Gamma_0(N)$ with a Fourier expansion
\[f(\tau)=\sum\limits_{n\in\Z} a_f(n,y)q^n,\]
where again $y=\Im(\tau)$. For a cusp $\kappa_j$, $j=1,...,M$ and $\kappa_1:=i\infty$, of $\Gamma_0(N)$ fix $\gamma_j\in\SL_2(\Z)$ with $\gamma_j\kappa_j=i\infty$. Suppose that for some $\eps>0$ we have
\begin{enumerate}
\item $f(\gamma_j^{-1}w)\left(\frac d{dw}\tau\right)^\frac k2=c_0^{(j)}+O(\Im(w)^{-\eps})$ as $w\rightarrow i\infty$
for all $j=1,...,M$ and $w=\gamma_j\tau$,
\item $a_f(n,y)=O(y^{2-k})$ as $y\rightarrow 0$ for all $n>0$.
\end{enumerate}
Then we define the \emph{holomorphic projection} of $f$ by
\[
(\pihol f)(\tau):=(\pihol^kf)(\tau):=c_0+\sum\limits_{n=1}^\infty c(n)q^n,
\]
with $c_0=c_0^{(1)}$ and
\begin{equation}\label{eq:holproj}
\begin{aligned}
c(n)=\frac{(4\pi n)^{k-1}}{(k-2)!}\int_0^\infty a_f(n,y)e^{-4\pi ny}y^{k-2}dy
\end{aligned}
\end{equation}
for $n>0$.
\end{definition}
This operator has several nice properties, some of which are summarized in the following proposition.
\begin{proposition}\label{prop:pihol}
Let $f$ be as in Definition~\ref{def:pihol}. Then the following are true.
\begin{enumerate}
\item If $f$ is holomorphic, then $\pihol f=f$.
\item The function $\pihol f$ lies in the space $\widetilde{M}_k(\Gamma_0(N))$.
\end{enumerate}
\end{proposition}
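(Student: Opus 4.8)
The plan is to obtain both statements from the unfolding method for the Petersson inner product, reducing the second to the purely cuspidal case by first subtracting an Eisenstein series matching the constant terms of $f$. For part (1), I would argue directly from \eqref{eq:holproj}: if $f$ is holomorphic then $a_f(n,y)=a_f(n)$ is independent of $y$, and for $n>0$ the defining integral becomes a $\Gamma$-integral,
\[
c(n)=\frac{(4\pi n)^{k-1}}{(k-2)!}\,a_f(n)\int_0^\infty e^{-4\pi ny}y^{k-2}\,dy=a_f(n),
\]
using $\int_0^\infty e^{-4\pi ny}y^{k-2}\,dy=\Gamma(k-1)/(4\pi n)^{k-1}$ and $\Gamma(k-1)=(k-2)!$. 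Since a holomorphic $f$ satisfies $f(w)\to a_f(0)$ as $w\to i\infty$, condition (1) of Definition~\ref{def:pihol} gives $c_0=c_0^{(1)}=a_f(0)$, so $\pihol f=f$.

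For part (2), the key observation I would exploit is that, for $n\ge1$, $c(n)$ is an explicit multiple of the pairing of $f$ with the cuspidal Poincar\'e series $P(n,k,N)$ of Lemma~\ref{Hfourier}. Unfolding $\langle f,P(n,k,N)\rangle$ over $\Gamma_\infty\backslash\Gamma_0(N)$ and integrating in $x$ across a period isolates the $n$th Fourier coefficient of $f$ and yields
\[
\langle f,P(n,k,N)\rangle=\int_0^\infty a_f(n,y)\,e^{-4\pi ny}\,y^{k-2}\,dy=\frac{(k-2)!}{(4\pi n)^{k-1}}\,c(n),
\]
while the same computation gives $\langle g,P(n,k,N)\rangle=\frac{(k-2)!}{(4\pi n)^{k-1}}a_g(n)$ for holomorphic $g\in S_k(\Gamma_0(N))$. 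The delicate point is that this unfolding is legitimate only when the associated double integral converges absolutely near the cusps, which can fail once $f$ has nonzero constant terms (the holomorphic Eisenstein series already show this). To avoid that, assuming first that $k>2$, I would subtract an Eisenstein series: the constant-term map identifies the Eisenstein subspace $\calE_k(\Gamma_0(N))$ with the constant-term data at the cusps, so there is a unique $E\in\calE_k(\Gamma_0(N))$ with constant term $c_0^{(j)}$ at each cusp $\kappa_j$. Then $f_0:=f-E$ still satisfies the hypotheses of Definition~\ref{def:pihol} but now has vanishing constant terms, hence decays at every cusp, so the unfolding is valid for $f_0$. Letting $g_0\in S_k(\Gamma_0(N))$ be the unique cusp form representing the (convergent) functional $g\mapsto\langle g,f_0\rangle$ on the finite-dimensional space $S_k(\Gamma_0(N))$, the identity above gives $c_{f_0}(n)=a_{g_0}(n)$ for all $n\ge1$, whence $\pihol f_0=g_0$. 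By the linearity of \eqref{eq:holproj} and part (1) applied to $E$, I would conclude
\[
\pihol f=\pihol f_0+\pihol E=g_0+E\in S_k(\Gamma_0(N))\oplus\calE_k(\Gamma_0(N))=M_k(\Gamma_0(N))=\widetilde M_k(\Gamma_0(N)).
\]

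The step I expect to be the main obstacle is the weight $k=2$ case. There $P(n,2,N)$ and the weight-$2$ Eisenstein series are no longer absolutely convergent, so the reproducing identity must be recovered via Hecke's trick, inserting a factor $|c\tau+d|^{-2s}$ and continuing analytically to $s=0$. Moreover, the constant terms of the holomorphic weight-$2$ Eisenstein series satisfy one linear relation, so the data $(c_0^{(j)})$ can no longer be matched inside $M_2(\Gamma_0(N))$; accommodating it forces in the quasimodular series $E_2$, whose nonholomorphic completion supplies the missing constant-term degree of freedom. This is precisely why $\pihol f$ must be allowed to lie in $\widetilde M_2(\Gamma_0(N))=\C E_2\oplus M_2(\Gamma_0(N))$ rather than in $M_2(\Gamma_0(N))$, and verifying that the $E_2$-component occurs with the correct coefficient is the most delicate part.
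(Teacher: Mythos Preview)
Your proposal is correct and follows essentially the same approach that the paper invokes: the paper's proof merely cites the direct $\Gamma$-integral computation for (1) and the Gross--Zagier argument via cuspidal Poincar\'e series (with Hecke's trick in weight~$2$) for (2), which is exactly the unfolding-plus-Eisenstein-subtraction strategy you outline. Your identification of the weight-$2$ case as the delicate step, and of the quasimodular $E_2$ as compensating for the missing constant-term degree of freedom, matches the role of Proposition~6.2 in \cite{GrZ86}.
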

\begin{proof}
Claim $(1)$ of this proposition is an easy and elementary calculation, see e.g. Proposition 3.2 of \cite{IRR13}, while the first proof of $(2)$ using Poincar\'e series (and ``Hecke's trick'' for the case of weight $2$) was given in Proposition 5.1 and Proposition 6.2 in \cite{GrZ86}. A different proof which uses spectral methods and the language of vector-valued modular forms is given in Theorem 3.3 in \cite{IRR13}.
\end{proof}

Proposition~\ref{prop:pihol}, when combined with the previous results on Rankin-Cohen brackets and harmonic Maass forms
is sufficient for proving Theorem~\ref{mainthm} when $M_{f_1}$ is good for $f_2$. As mentioned in the introduction,
this situation is quite rare. Therefore, we must extend this notion to accomodate the most general cases.
To this end we introduce {\it regularized holomorphic projections}.

\subsection{Regularized holomorphic projections}

The classical holomorphic projection is constructed to respect the Petersson inner product on the space of cusp forms. We construct a regularized holomorphic projection by the same recipe. This projection shall respect the regularized Petersson inner product introduced by Borcherds in \cite{Borcherds}.
\begin{definition}\label{defpiholreg}
Let $f:\H\rightarrow\C$ be a real-analytic modular form of weight $k\geq 2$ with a Fourier expansion
\[f(\tau)=\sum\limits_{n\in\Z} a_f(n,y)q^n.\]
Suppose further that for each cusp $\kappa$ of $\Gamma_0(N)$ there exists a polynomial $H_\kappa(X)\in\C[X]$, such that we have
\[(f|_k\gamma_\kappa^{-1})(\tau)-H_\kappa(q^{-1})=O(y^{-\eps})\]
for some $\eps>0$ and further suppose that $a_f(n,y)=O(y^{2-k})$ as $y\rightarrow 0$ for all $n>0$. Then we define the {\it regularized holomorphic projection} of $f$ by
$$
(\pihol^{reg}f)= H_{i\infty}(q^{-1}) +\sum\limits_{n=1}^\infty c(n)q^n,
$$
where
\begin{equation}\label{eqpiholreg}
c(n)=\lim\limits_{s\rightarrow 0}\frac{(4\pi n)^{k-1}}{(k-2)!}\int_0^\infty a_f(n,y)e^{-4\pi ny}y^{k-2-s}dy.
\end{equation}
\end{definition}
This operator enjoys a natural analog of Proposition~\ref{prop:pihol}.

\begin{proposition}\label{prop:piholreg}
Let $f$ be a function as in Definition~\ref{defpiholreg}.
\begin{enumerate}
\item If $f$ is holomorphic on $\H$, then we have $\pihol^{reg}f=f$.
\item The function $\pihol^{reg}f$ lies in the space $\widetilde{M}^!_k(\Gamma_0(N))$.
\item If $f$ satisfies Definition~\ref{def:pihol}, then
$\pihol^{reg}(f)=\pihol(f)$.
\end{enumerate}
\end{proposition}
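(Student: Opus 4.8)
The plan is to establish the three assertions in the order (1), (3), (2), handling (1) and (3) by direct Mellin-transform computations and then deducing the substantive claim (2) from the classical case, Proposition~\ref{prop:pihol}(2), by a reduction argument. Throughout I will use that $\pihol^{reg}$ is linear in $f$: both the coefficients $c(n)$ in \eqref{eqpiholreg} and the polynomial $H_{i\infty}$ depend linearly on $f$ wherever they are defined.

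For (1), if $f$ is holomorphic on $\H$ then its Fourier coefficients $a_f(n,y)=a_f(n)$ are independent of $y$, and for $n>0$ the substitution $u=4\pi ny$ gives
\[\int_0^\infty a_f(n)e^{-4\pi ny}y^{k-2-s}\,dy = a_f(n)\,(4\pi n)^{-(k-1-s)}\Gamma(k-1-s),\]
valid for $\Re(s)<k-1$, so that $c(n)=\lim_{s\to0}\tfrac{a_f(n)}{(k-2)!}(4\pi n)^{s}\Gamma(k-1-s)=a_f(n)$ because $\Gamma(k-1)=(k-2)!$. Since $f$ is holomorphic on $\H$, its only growth is at the cusps, so $H_{i\infty}(q^{-1})$ is exactly the principal part plus constant term of $f$ at $i\infty$, and adding this to $\sum_{n\ge1}a_f(n)q^n$ reproduces $f$. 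For (3), the hypotheses of Definition~\ref{def:pihol} force the unregularized integral to converge at $s=0$: condition (2) gives $a_f(n,y)=O(y^{2-k})$ as $y\to0$, so the integrand is $O(y^{-\Re(s)})$ near $0$ and integrable for $\Re(s)<1$, while the moderate growth of condition (1) together with the factor $e^{-4\pi ny}$ gives exponential decay as $y\to\infty$. Hence the integral is holomorphic in $s$ near $s=0$, the regularizing limit merely returns its value at $s=0$, which is the coefficient in \eqref{eq:holproj}; moderate growth also forces $H_{i\infty}(q^{-1})=c_0$, so $\pihol^{reg}f=\pihol f$.

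The heart of the proposition is (2), and the idea is to subtract off the singular behaviour at the cusps. Given $f$ as in Definition~\ref{defpiholreg} with prescribed polynomials $H_\kappa$, I would first produce a weakly holomorphic form $P\in M_k^{!}(\Gamma_0(N))$ whose principal (polar) part at each cusp $\kappa$ matches that of $f$. Granting this, $f-P$ is real-analytic modular of weight $k$ with only moderate growth at every cusp and still satisfies $a_{f-P}(n,y)=O(y^{2-k})$ as $y\to0$, so it meets the hypotheses of Definition~\ref{def:pihol}. By linearity together with (1) and (3),
\[\pihol^{reg}f=\pihol^{reg}P+\pihol^{reg}(f-P)=P+\pihol(f-P),\]
and $\pihol(f-P)\in\widetilde{M}_k(\Gamma_0(N))$ by Proposition~\ref{prop:pihol}(2). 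Thus $\pihol^{reg}f\in M_k^{!}(\Gamma_0(N))+\widetilde{M}_k(\Gamma_0(N))=\widetilde{M}^{!}_k(\Gamma_0(N))$, as claimed.

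The main obstacle is the existence of $P$ realizing the prescribed principal parts. For $k>2$ this is unobstructed: by the residue (Serre) duality pairing, the only constraints on the principal parts of a form in $M_k^{!}(\Gamma_0(N))$ come from pairing against holomorphic forms of the complementary weight $2-k<0$, of which there are none, so any prescribed polar parts occur. The case $k=2$ is precisely where the single obstruction, namely pairing against the constants in $M_0$ (the residue theorem for the weight-$2$ differential $P\,d\tau$), appears; this is exactly the phenomenon that forces the quasimodular $E_2$ into the picture, and it is already absorbed by working in $\widetilde{M}_2$ via Hecke's trick inside Proposition~\ref{prop:pihol}(2). Conceptually the whole argument rests on the fact that $c(n)$ is, up to the factor $\tfrac{(4\pi n)^{k-1}}{(k-2)!}$, the Borcherds-regularized Petersson product $\langle f,\,P(n,k,N)\rangle^{\operatorname{reg}}$ obtained by unfolding the Poincar\'e series against $f$; verifying that the regularization commutes with this unfolding is the one genuinely technical point I would check with care.
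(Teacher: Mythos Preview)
Your proof is correct and follows the same strategy as the paper's: there too (1) and (3) are dismissed as clear, and (2) is proved by subtracting from $f$ a suitable linear combination of weakly holomorphic (Poincar\'e and Eisenstein) forms so as to reduce to the moderate-growth situation of Proposition~\ref{prop:pihol}, with the paper framing this reduction through the Borcherds-regularized Petersson product and the unfolding against Poincar\'e series that you yourself describe in your closing paragraph. Your appeal to Serre duality for the existence of $P$ is simply a different packaging of the same step.

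One small inaccuracy, which does not affect the argument: the residue-theorem obstruction you cite for $k=2$ constrains only the \emph{constant terms} of a weight-$2$ weakly holomorphic form across the cusps (the residue of $P\,d\tau$ at a cusp is proportional to the $q^0$-coefficient there), not its polar part. Since you only need $P$ to match the strictly negative $q$-powers of each $H_\kappa$ in order that $f-P$ be bounded at every cusp, and since the genuine duality obstruction lives in $S_{2-k}(\Gamma_0(N))=0$ for all even $k\ge2$, the subtraction of $P$ works uniformly and no separate treatment of $k=2$ is needed at this point; Hecke's trick enters only later, inside Proposition~\ref{prop:pihol}(2), to handle the target space $\widetilde M_2(\Gamma_0(N))$.
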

\begin{proof}
Claims (1) and (3) are clear.
The proof of (2)
is a modification of the proofs of Propositions 5.1 and 6.2 in \cite{GrZ86} which concern the classical holomorphic projection.
 Here we indicate how to modify these proofs to this setting.

We begin by recalling the {\it regularized Petersson inner product}
defined by Borcherds \cite{Borcherds}. For $T>0$, we denote by $\calF_T(\SL_2(\Z))$ the truncated version of the usual fundamental domain of $\SL_2(\Z)$, i.e.
\[\calF_T(\SL_2(\Z)):=\left\{\tau\in\H\ :\ |\tau|\geq 1,\ |\Re(\tau)|\leq \frac 12,\ \Im(\tau)\leq T\right\}.\]
For a finite index subgroup $\Gamma\leq\SL_2(\Z)$ we define a truncated fundamental domain of $\Gamma$ as
\[\calF_T(\Gamma):=\bigcup\limits_{\gamma\in\calV}\gamma\calF_T(\SL_2(\Z)),\]
where $\calV$ is a fixed set of representatives of $\Gamma\setminus\SL_2(\Z)$. The regularized inner product of $g\in M_k(\Gamma)$ and $h$ a weight $k$ real-analytic modular form on $\Gamma$ ($k\geq 2$ even) with at most linearly exponential growth at the cusps is given as the constant term in the Laurent expansion of the meromorphic continuation of the expression
\[\langle g,h\rangle^{reg}:=\frac{1}{[\SL_2(\Z):\Gamma]}\lim\limits_{T\rightarrow\infty}\int\limits_{\calF_T(\Gamma)}g(\tau)\overline{h(\tau)
}y^{k-2s}\frac{dxdy}{y^2}.\]
It follows from Corollary 4.2 in \cite{BOR} that the weakly holomorphic Poincar\'e series from Section~\ref{poincareseries} together with the Eisenstein series (including the quasimodular Eisenstein series $E_2$ in case that $k=2$) are orthogonal to cusp forms with respect to $\langle\cdot,\cdot\rangle^{reg}$. By subtracting a suitable linear combination of these functions one can replace $f$ by a function that satisfies the growth condition in Proposition 5.1 in \cite{GrZ86}. The proof then follows \emph{mutatis mutandis}
as in \cite{GrZ86}.

We consider this product for $\Gamma=\Gamma_0(N)$, where $g$ is the regularized Poincar\'e series
\[P_{m,s}(\tau):=\sum\limits_{\gamma\in\Gamma_\infty\setminus\Gamma_0(N)}(y^se^{2\pi im\tau})|_k\gamma,\]
where $m\geq 1$ and $\Gamma_\infty:=\left\{\pm\left(\begin{smallmatrix} 1 & n \\ 0 & 1\end{smallmatrix}\right)\ ,\ n\in\Z\right\}$ denotes the stabilizer of the cusp $i\infty$ in $\Gamma_0(N)$.

Note that this regularization is necessary to ensure convergence in the case of weight $2$, if the weight is at least $4$, these converge to the cuspidal Poincar\'e series in Section~\ref{poincareseries} as $s\rightarrow 0$. Since the domain of integration, the truncated fundamental domain, is compact and all functions in the integrand are continuous, it is legitimate to replace $P_{m,s}$ by its definition as a series and interchange summation and integration. This yields ($\gamma=\smallabcd$)
\begin{align*}
\langle g,h\rangle^{reg}&=\frac{1}{[\SL_2(\Z):\Gamma]}\cdot \lim\limits_{T\rightarrow\infty}\sum\limits_{\gamma\in\Gamma_\infty\setminus\Gamma_0(N)}
\int\limits_{\calF_T(\Gamma_0(N))} e^{2\pi im\frac{a\tau+b}{c\tau+d}}\overline{h\left(\frac{a\tau+b}{c\tau+d}\right)}\frac{y^{k-s}}{|c\tau+d|^{2(k-s)}}\frac{dxdy}{y^2}\\
&=\frac{1}{[\SL_2(\Z):\Gamma]}\cdot \lim\limits_{T\rightarrow\infty}\int\limits_{\calF_T(\Gamma_\infty)} e^{2\pi im\tau}\overline{h\left(\tau\right)}y^{k-s}\frac{dxdy}{y^2}\\
&=\frac{1}{[\SL_2(\Z):\Gamma]}\cdot \int\limits_{\Gamma_\infty\setminus\H} e^{2\pi im\tau}\overline{h\left(\tau\right)}y^{k-s}\frac{dxdy}{y^2}.
\end{align*}
From here, the proof literally is the same as in \cite{GrZ86}, so we refer the reader to there.
\end{proof}

\subsection{Holomorphic projections of completed mixed mock modular forms}

For the proof of our main result, we need to know the action of the holomorphic projection operator on Rankin-Cohen brackets of harmonic Maass forms and cusp forms. The following result (see Theorem 3.6 in \cite{Mertpreprint}) gives an explicit formula for the Fourier coefficients of such a holomorphic projection. The case of ordinary products, i.e. $\nu=0$, is already contained in Theorem 3.5 in \cite{IRR13}. Before the statement of the proposition, let us define the polynomial
\begin{equation}\label{eq:P}
G_{a,b}(X,Y):=\sum\limits_{j=0}^{a-2} (-1)^j{{a+b-3} \choose {a-2-j}}{{j+b-2} \choose j} X^{a-2-j}Y^j\in\C[X,Y]
\end{equation}
for integers $a\geq 2$ and $b\geq 0$, which is homogeneous of degree $a-2$.
\begin{proposition}\label{theo:piholRCB}
Let $f_1\in S_{k_1}(\Gamma_0(N))$ and $f_2\in S_{k_2}(\Gamma_0(N))$ be cusp forms of even weights $k_1\geq k_2$ as in the introduction and let $M_{f_1}\in H_{2-k_1}(\Gamma_0(N))$ be a harmonic Maass form with shadow $f_1$.
Then we have for $0\leq \nu\leq \tfrac{k_1-k_2}2$ that
\begin{equation}\label{eq:piholRCB}
\begin{aligned}
&\pihol^{reg}([M_{f_1},f_2]_\nu)(\tau)=[M_{f_1}^+,f_2]_\nu(\tau)-(k_1-2)!\sum\limits_{h=1}^\infty q^h\left[\sum\limits_{n=1}^\infty a_2(n+h)\overline{a_1(n)}\right.\\
& \left.\times\sum\limits_{\mu=0}^\nu \left(\begin{smallmatrix}{\nu-k_1+1} \\ {\nu-\mu} \end{smallmatrix}\right)\left(\begin{smallmatrix}{\nu+k_2-1} \\ \mu\end{smallmatrix}\right)\Big((n+h)^{-\nu-k_2+1}G_{2\nu-k_1+k_2+2,k_1-\mu}(n+h,n)-n^{\mu-k_1+1}(n+h)^{\nu-\mu} \Big) \right].
     \end{aligned}
     \end{equation}
\end{proposition}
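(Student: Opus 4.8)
The plan is to compute the Fourier coefficients of $\pihol^{reg}([M_{f_1},f_2]_\nu)$ directly from the defining integral \eqref{eqpiholreg}, reducing the whole statement to a single Laplace-type integral involving an incomplete Gamma function. First I would invoke the canonical splitting $M_{f_1}=M_{f_1}^++M_{f_1}^-$ of Lemma~\ref{lem:split}. Because the shadow $f_1$ is a cusp form we have $c_{M_{f_1}}^-(0)=0$, so the middle term of \eqref{eq:split} drops out and $M_{f_1}^-(\tau)=\sum_{n\geq1}\overline{a_1(n)}\,n^{k_1-1}\Gamma(1-k_1;4\pi ny)\,q^{-n}$ (the shadow relation forces $c_{M_{f_1}}^-(n)=a_1(n)$). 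By bilinearity of the Rankin--Cohen bracket, $[M_{f_1},f_2]_\nu=[M_{f_1}^+,f_2]_\nu+[M_{f_1}^-,f_2]_\nu$, and although neither summand is individually modular, the coefficient formula \eqref{eqpiholreg} is $\mathbb{C}$-linear in the $y$-dependent Fourier coefficient, so the two contributions to the (genuinely modular) form $[M_{f_1},f_2]_\nu$ may be treated separately. The $M_{f_1}^+$-contribution is holomorphic on $\H$ and of controlled, at most polynomial-in-$q^{-1}$, growth at each cusp (since $f_2$ is cuspidal), so by the elementary computation underlying Proposition~\ref{prop:pihol}(1) and Proposition~\ref{prop:piholreg}(1) its coefficients pass through \eqref{eqpiholreg} unchanged, producing the term $[M_{f_1}^+,f_2]_\nu$. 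Everything then rests on evaluating the $M_{f_1}^-$-contribution.

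The second step is a derivative lemma for the non-holomorphic terms. Writing $\partial_\tau=\tfrac12(\partial_x-i\partial_y)$ and using $\tfrac{d}{dx}\Gamma(s;x)=-x^{s-1}e^{-x}$ together with the recurrence $\Gamma(s;x)=(s-1)\Gamma(s-1;x)+x^{s-1}e^{-x}$, a short computation gives $\partial_\tau\big(\Gamma(s;4\pi ny)q^{-n}\big)=2\pi i n(1-s)\,\Gamma(s-1;4\pi ny)q^{-n}$, and iterating from $s=1-k_1$ yields
\[\frac{\partial^\mu}{\partial\tau^\mu}\Big(\Gamma(1-k_1;4\pi ny)\,q^{-n}\Big)=(2\pi in)^\mu\,\frac{(k_1+\mu-1)!}{(k_1-1)!}\,\Gamma(1-k_1-\mu;4\pi ny)\,q^{-n}.\]
Combining this with $\partial_\tau^{\nu-\mu}f_2=\sum_m a_2(m)(2\pi im)^{\nu-\mu}q^m$ and Definition~\ref{def:RCB}, the powers of $2\pi i$ cancel, and extracting the coefficient of $q^h$ (so $m=n+h$) gives the $y$-dependent coefficient
\[a(h,y)=\frac{1}{(k_1-1)!}\sum_{\mu=0}^\nu(-1)^\mu\binom{\nu-k_1+1}{\nu-\mu}\binom{\nu+k_2-1}{\mu}(k_1+\mu-1)!\sum_{n\geq1}\overline{a_1(n)}\,a_2(n+h)\,n^{k_1+\mu-1}(n+h)^{\nu-\mu}\Gamma(1-k_1-\mu;4\pi ny).\]

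The decisive step is to feed $a(h,y)$ into \eqref{eqpiholreg} with $\kappa:=2\nu-k_1+k_2+2$ the weight of $[M_{f_1},f_2]_\nu$, interchange the $n$-sum (absolutely convergent for $\Re(s)$ large) with the integral, continue analytically in $s$, and evaluate
\[\int_0^\infty\Gamma(1-k_1-\mu;4\pi ny)\,e^{-4\pi hy}\,y^{\kappa-2-s}\,dy.\]
By the standard Laplace transform of an incomplete Gamma function (e.g.\ Gradshteyn--Ryzhik), this equals explicit $\Gamma$-factors times a Gauss hypergeometric function with argument $\tfrac{h}{n+h}$ and upper parameter $\kappa-k_1-\mu-s$. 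As $s\to0$ this parameter tends to the non-positive integer $\kappa-k_1-\mu$, so the hypergeometric series terminates into a polynomial while the accompanying $\Gamma$-factor develops a pole; it is precisely the regularization $s\to0$ that extracts a finite value from this $0\cdot\infty$ situation. The finite part of the resulting Laurent expansion is a rational function of $n$ and $h$, and the goal is to match it, after collecting the surviving constants into the prefactor $-(k_1-2)!$, against the two-term expression $(n+h)^{-\nu-k_2+1}G_{\kappa,k_1-\mu}(n+h,n)-n^{\mu-k_1+1}(n+h)^{\nu-\mu}$: the terminating polynomial should reorganize into the homogeneous polynomial $G_{\kappa,k_1-\mu}$ of \eqref{eq:P}, and the residual from the pole cancellation should supply the monomial correction.

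I expect this last integral evaluation, together with the $s\to0$ limit, to be the main obstacle. The raw integral diverges at $y=0$ exactly because $\kappa-k_1-\mu\leq0$, so the $s$-regularization is unavoidable rather than cosmetic, and the delicate bookkeeping is to verify that the terminating hypergeometric reassembles into the combinatorial polynomial $G_{\kappa,k_1-\mu}$ with precisely the binomial coefficients appearing in \eqref{eq:P}. A useful internal consistency check along the way is that both terms of the target bracket are homogeneous of the same degree $\nu-k_1+1$ in $(n,h)$, which constrains and guides the matching of powers.
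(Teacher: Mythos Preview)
Your strategy is exactly the paper's: split $M_{f_1}=M_{f_1}^++M_{f_1}^-$, note that holomorphic Fourier coefficients pass through \eqref{eqpiholreg} unchanged so $[M_{f_1}^+,f_2]_\nu$ survives intact, differentiate the non-holomorphic part term by term, and reduce to a single Laplace-type integral of an incomplete Gamma function.

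The concrete problem is the form you take for $M_{f_1}^-$. You copy $n^{k_1-1}\Gamma(1-k_1;4\pi ny)q^{-n}$ from the display in Lemma~\ref{lem:split}, but that display is misprinted: a direct computation shows $\xi_{2-k_1}\big(\Gamma(1-k_1;4\pi ny)q^{-n}\big)$ carries a stray factor $y^{2-2k_1}$, so it is not holomorphic and cannot be the non-holomorphic part of a harmonic Maass form with cuspidal shadow. The paper's own proof silently uses the correct expansion
\[
M_{f_1}^-(\tau)=\sum_{n\ge1}n^{1-k_1}\overline{a_1(n)}\,\Gamma(k_1-1;4\pi ny)\,q^{-n}
\]
(written there as $\Gamma^*(k_1-1;4\pi ny)\bar q^{-n}$). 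Your recurrence $\partial_\tau\big(\Gamma(s;4\pi ny)q^{-n}\big)=2\pi in(1-s)\Gamma(s-1;4\pi ny)q^{-n}$ is correct; iterated from $s=k_1-1$ it yields
\[
\frac{1}{(2\pi i)^\mu}\partial_\tau^\mu M_{f_1}^-=(-1)^\mu\frac{\Gamma(k_1-1)}{\Gamma(k_1-\mu-1)}\sum_{n\ge1}n^{\mu-k_1+1}\overline{a_1(n)}\,\Gamma(k_1-\mu-1;4\pi ny)\,q^{-n},
\]
with the Gamma parameter a \emph{positive} integer throughout. The holomorphic-projection integral is then
\[
\int_0^\infty\Gamma(k_1-\mu-1;4\pi ny)\,e^{-4\pi hy}\,y^{2\nu-k_1+k_2}\,dy,
\]
which for $\nu=\tfrac{k_1-k_2}{2}$ is an honest convergent integral, evaluated elementarily via the finite expansion $\Gamma(m;x)=(m-1)!\,e^{-x}\sum_{j=0}^{m-1}x^j/j!$; the resulting finite sum is exactly what reorganises into $G_{\kappa,k_1-\mu}(n+h,n)$. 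The $0\cdot\infty$ pole cancellation and terminating-hypergeometric manoeuvre you anticipate are artefacts of the wrong parameter: with $\Gamma(1-k_1-\mu;\cdot)$ the integrand blows up like $y^{1-k_1-\mu}$ at $y=0$, which is why your integral genuinely diverges, whereas with the correct parameter the integrand is bounded there and no such delicacy arises.
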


\begin{proof}
We write
\[M_{f_1}^-(\tau)=\sum\limits_{n=1}^\infty n^{1-k_1}\overline{a_1(n)}\Gamma^*(k_1-1,4\pi ny)\overline{q}^{-n}\]
with $\Gamma^*(\alpha;x):=e^x\Gamma(\alpha;x)$. This representation makes it easy to determine the $\mu$th derivative of $M_{f_1}^-$ as
\[\frac{1}{(2\pi i)^\mu}\left(\frac{\partial^\mu}{\partial\tau^\mu}M_{f_1}^-\right)(\tau)=(-1)^{\mu}\frac{\Gamma(k_1-1)}{\Gamma(k_1-\mu-1)}\sum\limits_{n=1}^\infty n^{\mu-k_1+1}\overline{a_1(n)}\Gamma(k_1-\mu-1;4\pi ny)q^{-n}.\]

Using this, we find that
\[[M_{f_1}^-,f_2]_\nu(\tau)=\sum\limits_{h\in\Z} b(h,y)q^h,\]
where
\begin{align*}
b(h,y)=&\sum\limits_{n=1}^\infty\sum\limits_{\mu=0}^\nu {{1-k_1+\nu} \choose \nu-\mu} {{k_2+\nu-1} \choose \mu}\\
       &\qquad\qquad\qquad\times\frac{\Gamma(k_1-1)}{\Gamma(k_1-\mu-1)}n^{1-k_1+\mu}\overline{a_1(n)}\Gamma(k_1-\mu-1;4\pi ny)a_2(n+h)(n+h)^{\nu-\mu}.
\end{align*}
Thus the holomorphic projection of this becomes
\[\pihol^{reg}([f^-,g]_\nu)=\sum\limits_{h=1}^\infty b(h)q^h,\]
with
\begin{align*}
b(h)=&\frac{(4\pi h)^{2\nu-k_1+k_2+1}}{(2\nu-k_1+k_2)!}\sum\limits_{n=1}^\infty\sum\limits_{\mu=0}^\nu {{1-k_1+\nu} \choose \nu-\mu} {{k_2+\nu-1} \choose \mu}\\
     &\qquad\qquad\qquad\times\frac{\Gamma(k_1-1)}{\Gamma(k_1-\mu-1)}n^{1-k_1+\mu}\overline{a_1(n)}a_2(n+h)(n+h)^{\nu-\mu}\\
     &\qquad\qquad\qquad\times\int\limits_0^\infty \Gamma(k_1-\mu-1;4\pi ny)e^{-4\pi hy}y^{2\nu-k_1+k_2}dy.
\end{align*}
For the evaluation of the integral we may interchange the outer integration and the implicit one in the definition of the incomplete Gamma function, which after several substitutions of variables and simplification steps, which are carried out in detail in the proof of Lemma 3.7 in \cite{Mertpreprint}, yields the claim. We point out that none of these steps actually changes the order of summation, so also the case of conditional convergence ($\nu=\tfrac{k_1-k_2}2$) works fine.
\end{proof}
\begin{remark}
In the case $k_1=k_2=k$ and $\nu=0$, equation \eqref{eq:piholRCB} simplifies to
\[\pihol^{reg}(M_{f_1}\cdot f_2)(\tau)=M_{f_1}^+(\tau)\cdot f_2(\tau)-(k-2)!\sum\limits_{h=1}^\infty\left[\sum\limits_{n=1}^\infty a_2(n+h)\overline{a_1(n)}\left(\frac{1}{(n+h)^{k-1}}-\frac{1}{n^{k-1}}\right)\right]q^h.\]
\end{remark}

\subsection{The Dirichlet series}\label{Lfcndef}

Here we define the general Dirichlet series in Theorem~\ref{mainthm}.
We recall from the introduction the
{\it derived} shifted convolution series
$$
D^{(\mu)}(f_1, f_2,h;s):=
\sum_{n=1}^{\infty}\frac{a_{1}(n+h)\overline{a_{2}(n)}(n+h)^{\mu}}{n^s}.
$$
Obviously, we have that $D(f_1,f_2,h;s)=D^{(0)}(f_1,f_2,h;s)$.
Due to the absence of the symmetry in the $f_i$, it is natural to consider special values
of the symmetrized Dirichlet series
\begin{equation}\label{Dhat}
\Dhat(f_1, f_2,h;s):=D(f_1,f_2,h;s)-\delta_{k_1,k_2}D(\overline{f_2},\overline{f_1},-h;s),
\end{equation}
where we set $a_j(n):=0$ for $n\leq 0$ and $\delta_{ij}$ denotes the usual Kronecker $\delta$.
As we shall see, one important consequence of symmetrizing these functions is the convergence of the special values
that we consider here. 
Note that $\Dhat(f_1,f_2,h;s)$ is, as a Dirichlet series, in fact conditionally convergent at $s=k_1-1$. This can be 
seen immediately from the estimate 
\[\sum\limits_{m=1}^Ma_1(m+h)\overline{a_2(m)}\ll_\eps M^{\frac{k_1+k_2}{2}-\delta}\]
for $h\leq M^{\frac{4}{3}-\eps}$ ($\eps>0$) and some $\delta>0$, see Corollary 1.4 in \cite{Blomer}.

\noindent\emph{Remark:} We note that Blomer chose a different normalization and only considered shifted convolution sums for a single cusp form (i.e. $a_1(n)=a_2(n)$ for all $n$). However, an inspection of the proof of Theorem 1.3 in \cite{Blomer} shows that all his arguments carry over directly to our case. 

For $\nu=\tfrac{k_1-k_2}{2}$, we consider the generating function in $h$-aspect
of the symmetrized shifted convolution special values of the Dirichlet series, convergent for $\Re(s)>k_1$,

\begin{equation}\label{Dhatnu}
\begin{aligned}
\Dhat^{(\nu)}(f_1,f_2,h;s):=\sum_{\mu=0}^{\nu}&\alpha(\nu,k_1, k_2,\mu)D^{(\nu-\mu)}(f_1,f_2,h;s-\mu)\\
  &\ \qquad-\beta(\nu,k_1,k_2)D^{(0)}(\overline{f_2},\overline{f_1},-h;s-\nu).
\end{aligned}
\end{equation}
The $\alpha(\nu,k_1, k_2,\mu)$ and $\beta(\nu,k_1,k_2)$ are integers defined by
\begin{equation}\label{alpha}
\alpha(\nu,k_1,k_2,\mu):={{\nu-k_1+1} \choose {\nu-\mu}}{{\nu+k_2-1} \choose \mu}
\end{equation}
and
\begin{equation}\label{beta}
\beta(\nu,k_1,k_2):=\sum\limits_{\mu=0}^\nu {{\nu-k_1+1} \choose {\nu-\mu}}{{\nu+k_2-1} \choose \mu}.
\end{equation}
Note that again, viewed as a Dirichlet series, $\Dhat^{(\nu)}$ is conditionally convergent at $s=k_1-1$.

Of course we have that $\Dhat(f_1,f_2,h;s)=\Dhat^{(0)}(f_1,f_2,h;s)$.
The generating function we study is
\begin{equation}\label{genfcn}
\L^{(\nu)}(f_1,f_2;\tau):=\sum_{h=1}^{\infty}
 \Dhat^{(\nu)}(f_1,f_2,h;k_1-1)q^h.
\end{equation}
In the special case when $\nu=0$, we have that
$\L^{(0)}(f_1,f_2;\tau)=\L(f_1,f_2;\tau)$.

\subsection{Proof of Theorem~\ref{mainthm} and Corollary~\ref{cuspidalgoodness}}
We can now prove Theorem~\ref{mainthm}.
 
\begin{proof}[Proof of Theorem~\ref{mainthm}]
If one plugs in the definition of $G_{a,b}$ in \eqref{eq:P} into \eqref{eq:piholRCB} 
one gets that
\begin{align*}
&\pihol^{reg}([M_{f_1},f_2]_\nu)(\tau)=[M_{f_1}^+,f_2]_\nu(\tau)\\
&\quad-(k_1-2)!\sum\limits_{h=1}^\infty q^h \left\lbrace\sum\limits_{n=1}^\infty a_2(n+h)\overline{a_1(n)}\left[\sum\limits_{\mu=0}^\nu\left(\begin{matrix}{\nu-k_1+1} \\ {\nu-\mu}\end{matrix}\right)\left(\begin{matrix}{\nu+k_2-1} \\ \mu\end{matrix}\right)\frac{1}{(n+h)^{k_1-\nu-1}}\right.\right.\\
&\quad\quad\qquad\qquad\qquad \left.\left.-\left(\begin{matrix}{\nu-k_1+1} \\ {\nu-\mu}\end{matrix}\right)\left(\begin{matrix}{\nu+k_2-1} \\ \mu\end{matrix}\right) \frac{(n+h)^{\nu-\mu}}{n^{k_1-\mu-1}}\right]\right\rbrace.
\end{align*}
By definition, this is
\[[M_{f_1}^+,f_2]_\nu(\tau)+(k_1-2)!\cdot\L^{(\nu)}(f_2,f_1;\tau)\]
where $\alpha(\nu,k_1,k_2,\mu)$ and $\beta(\nu,k_1,k_2)$ are
defined by (\ref{alpha}) and (\ref{beta}).

By Proposition~\ref{prop:piholreg}, this function lies in the space $\widetilde{M}^{!}_{2}(\Gamma_0(N))$ which yields Theorem~\ref{mainthm}
in the general case. When $M_{f_1}$ is good for $f_2$, then Proposition~\ref{prop:pihol} implies that it lives
in the space $\widetilde{M}_{2}(\Gamma_0(N))$.
\end{proof}

\begin{proof}[Proof of Corollary~\ref{cuspidalgoodness}]
The formulas stated in the corollary are exactly the same as in Theorem~\ref{mainthm}, keeping in mind that by Lemma~\ref{poincarerelationships} the shadow of $Q$ is precisely $(1-k)\cdot P$, we only have to make sure that $Q$ is good for $P$. But this follows immediately from Lemma~\ref{growthQ} since $P$ decays exponentially in every cusp of $\Gamma_0(N)$ and $Q^+$ has exactly one simple pole at infinity and grows moderately in the other cusps.
\end{proof}

\end{document}